\newtheorem*{thma}{Theorem A}
\newtheorem*{thmb}{Theorem B}
\newtheorem*{thmc}{Theorem C}
\newcommand{\LL}{\Lambda}
\newcommand{\QQ}{\mathbb{Q}}
\newcommand{\FF}{\mathcal{F}}
\newcommand{\GG}{\mathcal{G}}
\newcommand{\lra}{\longrightarrow}
\newcommand{\ZZ}{\mathbb{Z}}
\newcommand{\PP}{\mathcal{P}}
\newcommand{\NN}{\mathcal{N}}
\newcommand{\ra}{\rightarrow}
\newcommand{\be}{\begin{equation}}
\newcommand{\ee}{\end{equation}}
\newcommand{\XX}{\chi}
\newcommand{\kk}{\mathcal{K}}
\newcommand{\mm}{\hbox{\frakfamily m}}
\newcommand{\FFc}{\mathcal{F}_{\textup{\lowercase{can}}}}
\numberwithin{equation}{section}
\newtheorem{thm}{Theorem}[section]
\newtheorem{lemma}[thm]{Lemma}
\newenvironment{define}{\par\medskip\noindent\refstepcounter{thm}
\bgroup{\hspace*{-0.15 cm}\bf{Definition}
\thethm.}\bgroup}{\egroup \egroup\par\medskip}\newtheorem{prop}[thm]{Proposition}
\newtheorem{cor}[thm]{Corollary}
\newenvironment{rem}{\par\medskip\noindent\refstepcounter{thm}
\bgroup{\hspace*{-0.15 cm}\bf{Remark} \thethm.}\bgroup}{\egroup
\egroup\par\medskip} \parskip 2pt
\newenvironment{example}{\par\medskip\noindent\refstepcounter{thm}
\bgroup{\hspace*{-0.15 cm}\bf{Example}
\thethm.}\bgroup}{\egroup \egroup\par\medskip}
\begin{document}
\title{{T}\lowercase{amagawa  defect of} {E}\lowercase{uler Systems}}

\author{K\^az\i m B\"uy\"ukboduk}

\address{Kazim Buyukboduk \hfill\break\indent
IH\'ES, Le Bois-Marie, 35,
\hfill\break\indent Route de Chartres \hfill\break\indent F-91440 Bures-sur-Yvette
\hfill\break\indent FRANCE} \email{{\tt kazim@ihes.fr }\hfill\break\indent {\it
Web page:} {\tt http://math.stanford.edu/$\sim$kazim}}

\keywords{Euler systems, Kolyvagin systems, Tamagawa Numbers, the Birch and Swinnerton-Dyer Conjecture.}
\subjclass[2000]{Primary 11G40, 11F80, 11F11; Secondary 11R34}

\begin{abstract}

As remarked in~\cite{mr02} Proposition 6.2.6 and~\cite{kbbonline} Remark 3.25 one does not expect the Kolyvagin system obtained from an Euler system for a $p$-adic Galois representation $T$ to be \emph{primitive} (in the sense of~\cite{mr02} Definition 4.5.5) if $p$ divides a Tamagawa number at a prime $\ell\neq p$; thus fails to compute the correct size of the relevant Selmer module. In this paper we obtain a lower bound for the size of the cokernel of the Euler system to Kolyvagin system map (see Theorem 3.2.4 of~\cite{mr02} for a definition) in terms of the Tamagawa numbers of $T$, refining~\cite{mr02} Propostion 6.2.6. We show how this partially accounts for the missing Tamagawa factors in Kato's calculations with his Euler system in~\cite{kato}.

\end{abstract}

\maketitle

\section{Introduction}

Let $p>2$ be a rational prime and let $\mathcal{O}$ be the ring of integers of a finite extension of $\QQ_p$. Denote the maximal ideal of $\mathcal{O}$ by $\mm$ and fix a generator $\pi$ of $\mm$. Let $T$ be a free $\mathcal{O}$-module of finite rank, on which the absolute Galois group $G_\QQ:=\textup{Gal}(\overline{\QQ}/\QQ)$ acts continuously, and the action of $G_\QQ$ on $T$ is unramified outside a finite number of places. For such a $T$, the notion of an Euler system (which is originally due to Kolyvagin~\cite{koly}) has been generalized in Rubin~\cite{r00}, Kato~\cite{kato2} and Perrin-Riou~\cite{pr5} to prove upper bounds for the Selmer group attached to the Cartier dual $T^*$ of the Galois representation $T$. 
 
Starting from an Euler system, Kolyvagin uses his descent argument to obtain what he calls \emph{derivative classes}. These derivative classes are used to produce bounds for the dual Selmer group. \cite{mr02} starts exactly with these classes, and they observe that the derivative classes enjoy stronger local conditions than has been previously utilized. Classes with these stronger local conditions (and with the same interrelations that the derivative classes ought to satisfy) are called \emph{Kolyvagin systems}. We refer the reader to~\cite{mr02} \S3 for a detailed description. Since Kolyvagin systems are modeled after the derivative classes, they have exactly the same applications, namely they give upper bounds for the dual Selmer group. In fact Mazur and Rubin exploits the extra rigidity gained by their observation to prove, in many cases of interest, that the Kolyvagin system bound on the dual Selmer group is strict and in fact one could completely determine the structure of the dual Selmer group in terms of a Kolyvagin system (if the Kolyvagin system we use is \emph{primitive} in the sense of Definition~\ref{primitive} below); see \cite{mr02} Theorems 4.5.6, 4.5.9 and 5.2.14.

The discussion above already portrays Kolyvagin systems as more fundamental objects than Euler systems. In fact, it is also possible to prove that Kolyvagin systems exist in many cases, however, it is impossible to write these down explicitly in full generality. The only cases where the bound provided by a Kolyvagin system can be made explicit are the cases where the Kolyvagin system used comes from an Euler system, via Kolyvagin's descent. This map from the collection of Euler systems to the collection of Kolyvagin systems will be referred to as the \emph{Euler system to Kolyvagin system map}; see Theorem~\ref{esksmap} below for a slightly more detailed description of this map. 

One important feature of the bounds provided by a Kolyvagin system obtained from one of the Euler systems known to date is that they are closely related to the special values of $L$-functions. Such bounds thus provide evidence for the Bloch-Kato conjectures~\cite{blochkato}, which predict the orders of these Selmer groups in terms of the special values of a relevant $L$-function. 

A natural question to ask is when these bounds given by an Euler system (or, equivalently, by the Kolyvagin system obtained from it) are sharp. In view of the results of~\cite{mr02}, this is equivalent to (under certain technical assumptions) asking when the Kolyvagin system obtained from the Euler system we started with is primitive. For example, consider Kato's Euler system~\cite{kato}, which is an Euler system for $T=T_p(E)$, the $p$-adic  Tate module of an elliptic curve $E_{/\QQ}$. As explained in~\cite{mr02} Remark 6.2.5, Kato's Euler system does not give rise to a primitive Kolyvagin system if $p$ divides one of the Tamagawa numbers of $E$. In fact they prove that the Euler system to Kolyvagin system map in this setting has non-trivial cokernel if $p$ divides a Tamagawa number, and as a result it is impossible to obtain a primitive Kolyvagin system from an Euler system in this case. We call this phenomenon the \emph{Tamagawa defect of Euler systems}.

However, the arguments of~\cite{mr02} (particularly Proposition 6.2.6 of \emph{loc. cit.}) is not sufficient to obtain an improved lower bound on the size of this cokernel in terms the Tamagawa factors. This is what we do in this paper:

\begin{thma}
Suppose $\pi^n$ divides a Tamagawa number of the Galois representation $T$. Under suitable hypotheses on $T$ the image of the Euler system to Kolyvagin system map (of Theorem~\ref{esksmap}) is contained in $\mm^n\textup{\textbf{KS}}(T)$, where $\textup{\textbf{KS}}(T)$ denotes the $\mathcal{O}$-module of Kolyvagin systems for $T$.
\end{thma}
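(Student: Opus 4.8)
The plan is to trace through Kolyvagin's descent construction and see precisely where a Tamagawa number at a bad prime $\ell \neq p$ forces a loss of $\pi$-divisibility. The Euler system to Kolyvagin system map, as in \cite{mr02} Theorem 3.2.4, produces from an Euler system $\mathbf{c}$ a Kolyvagin system $\boldsymbol{\kappa}$ whose component $\kappa_n$ (indexed by a squarefree product $n$ of Kolyvagin primes) is built out of the derivative classes $D_n c_{\mathbf{Q}(n)}$ of the Euler system classes at the abelian extensions $\mathbf{Q}(n)/\mathbf{Q}$. The key point is that at the trivial level $n = 1$ (and more generally) the Kolyvagin system component is obtained by restricting the global Euler system class $c_{\mathbf{Q}}$ to the semilocal cohomology and then checking that it lands in the correct Selmer structure; it is exactly in matching the local condition at $\ell$ that the Euler factor $P_\ell(\mathrm{Fr}_\ell^{-1})$ intervenes, and $p^n \mid$ (Tamagawa number at $\ell$) is, up to the standard dictionary, a statement about $\pi$-divisibility of the relevant local term. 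So the first step is to isolate this: show that the natural map from Euler system classes into the module where $\kappa_1$ lives factors through multiplication by $\pi^n$ at the prime $\ell$, hence the image is contained in $\mm^n$ times the target at level $1$.

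The second step is to propagate this from level $n=1$ to all levels $n$. Here I would use the fact that a Kolyvagin system is determined by a compatible system of classes $\kappa_n$ satisfying the finite/singular interrelations of \cite{mr02} \S3, and that the module $\mathbf{KS}(T)$ of Kolyvagin systems is, under the running hypotheses, a free (or at least torsion-free and well-controlled) $\mathcal{O}$-module of rank one — this is the structure theorem \cite{mr02} Theorem 5.2.10 / 4.4.1 type result, which I am allowed to invoke. Because $\mathbf{KS}(T)$ is generated (up to the stated hypotheses) by a single primitive Kolyvagin system $\boldsymbol{\kappa}^{\mathrm{prim}}$, any Kolyvagin system is $\lambda \cdot \boldsymbol{\kappa}^{\mathrm{prim}}$ for some $\lambda \in \mathcal{O}$, and the divisibility of $\lambda$ by $\pi^n$ can be detected at the single component $\kappa_1$ — which is precisely what step one controls. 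Thus the level-$1$ computation upgrades to the assertion that the whole image lies in $\mm^n \mathbf{KS}(T)$.

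The third step is bookkeeping on the "suitable hypotheses": one needs the Mazur–Rubin running hypotheses (H.0)–(H.5) or their analogues ensuring that $\mathbf{KS}(T)$ is free of rank one and that the core Selmer rank is $1$, plus the hypothesis that the Euler system one starts with is in fact nonzero/nontrivial so that $\lambda \neq 0$ and the divisibility statement is not vacuous. I would also need to be careful that $\pi^n \mid$ (Tamagawa number) is being read off the correct local Euler factor at $\ell$, matching the normalization in \cite{kato}; this is where the comparison with Kato's calculations enters, and I would cite the relevant local computation (e.g. comparing $P_\ell(\mathrm{Fr}_\ell^{-1} x)$ evaluated at $x = 1$ with the Tamagawa number via the Bloch–Kato local Tamagawa measure).

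The main obstacle I anticipate is step one: pinning down the exact local contribution at $\ell$ inside the derivative-class construction. The derivative operator $D_\ell$ and the telescoping identity that makes $D_n c_{\mathbf{Q}(n)}$ descend to a class over $\mathbf{Q}$ interact with the Euler factor $P_\ell$, and one must show that the failure of $P_\ell(\mathrm{Fr}_\ell^{-1})$ to be a unit — equivalently, the $\pi$-adic valuation of the Tamagawa number — transfers without loss into a genuine $\pi^n$ factor in the Kolyvagin class, rather than being absorbed or cancelled. Handling this cleanly, and uniformly in $n$, is the technical heart of the argument; everything downstream is the structure theory of $\mathbf{KS}(T)$, which is quotable.
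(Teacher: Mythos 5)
The reduction in your second step is where the argument genuinely breaks. You claim that, since $\textup{\textbf{KS}}(T)$ is free of rank one with a primitive generator $\kappa^{\textup{prim}}$, the divisibility by $\pi^n$ of the scalar $\lambda$ (where the image of the Euler system is $\lambda\cdot\kappa^{\textup{prim}}$) ``can be detected at the single component $\kappa_1$''. It cannot: from $\kappa_1=\lambda\,\kappa^{\textup{prim}}_1$ together with $\kappa_1\in\mm^n H^1$ you learn nothing about $\lambda$ unless you also control the $\pi$-divisibility, indeed the non-vanishing, of $\kappa^{\textup{prim}}_1$; primitivity only says the image of $\kappa^{\textup{prim}}$ in $\textup{\textbf{KS}}(T/\mm T)$ is nonzero, and puts no constraint on the single class $\kappa^{\textup{prim}}_1$, which may be divisible by $\pi$ or identically zero (for Kato's Euler system the case $\kappa_1=0$ is expected exactly when $L(E,1)=0$, and \S\ref{concluding} emphasizes that the theorem says strictly more than any statement about $\kappa_1$ precisely in that case --- your reduction would make the theorem vacuous there instead of proving it). Divisibility inside $\textup{\textbf{KS}}(T)$ has to be detected on all components simultaneously, via the surjection $\textup{\textbf{KS}}(T)\ra\textup{\textbf{KS}}(T/\mm^nT,\FFc,\PP_n)$ whose kernel is $\mm^n\textup{\textbf{KS}}(T)$; showing the Euler system dies under this map is the actual target.

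Your first step is also not the right local statement, and this is the missing mechanism. What the Euler-system-to-Kolyvagin-system construction provides (\cite{mr02} Remark A.5, i.e.\ the proof of Theorem 3.2.4 of \cite{mr02}) is not that the derived classes pick up a factor $\pi^n$ from the Euler factor at $\ell$, but that mod $\mm^n$ they satisfy the \emph{stricter unramified} local condition at $\ell$: the map factors through $\textup{\textbf{KS}}(T/\mm^nT,\FF_{\textup{u-}\ell},\PP_n)$, where $H^1_{\FF_{\textup{u-}\ell}}(\QQ_\ell,T)\subset H^1_{\FFc}(\QQ_\ell,T)$ has index the $p$-part of the Tamagawa number (Remark~\ref{rem:ur-finite comparison}). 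A global class landing locally in a subgroup of index $p^n$ is not thereby divisible by $\pi^n$, so no direct ``valuation of the local term'' argument gives your level-one divisibility. The conversion of the index statement into $\pi^n$-divisibility of the whole system requires the Kolyvagin-system machinery: one checks that the propagated condition $\FF_{\textup{u-}\ell}$ is cartesian on the quotients of $T/\mm^nT$ (Proposition~\ref{main-cart}, and this is exactly where the hypothesis that the Tamagawa quotient tensored with $R/\mm^n$ is free enters), deduces that the modified structure has core Selmer rank $0$, hence $\textup{\textbf{KS}}(T/\mm^nT,\FF_{\textup{u-}\ell},\PP_n)=0$ (Corollary~\ref{KS-vanishing}), and concludes that the composite $\textup{\textbf{ES}}(T)\ra\textup{\textbf{KS}}(T/\mm^nT,\FFc,\PP_n)$ vanishes. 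Without this step neither half of your plan closes.
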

See Theorem~\ref{ESKS cokernel} below for details.

In particular, the hypotheses of Theorem A hold when $T=T_p(E)$, i.e. when $T$ is the $p$-adic Tate module of an elliptic curve $E_{/\QQ}$ with conductor $N$. Let $\kappa^{\textup{Kato}}$ denote the Kolyvagin system obtained from Kato's Euler system, as in~\cite{mr02} \S6.2.  Let $c_\ell$ be the Tamagawa number of $E$ at $\ell$, and suppose $p^n|c_\ell$.

\begin{thmb}
$\kappa^{\textup{Kato}} \in p^n\textup{\textbf{KS}}(T).$ 
\end{thmb}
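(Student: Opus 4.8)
The plan is to deduce Theorem B from Theorem A by checking that the hypotheses of Theorem A (equivalently, those of Theorem~\ref{ESKS cokernel}) are satisfied for $T=T_p(E)$, and then specializing to $\kappa^{\textup{Kato}}$. First I would recall from~\cite{mr02} \S6.2 that $T=T_p(E)$ satisfies the standard ``core rank one'' hypotheses (H.0)--(H.5) of \emph{loc.\ cit.} whenever $p$ is chosen so that the mod-$p$ representation is suitably nondegenerate; for the elliptic curve case the relevant conditions (irreducibility of $E[p]$ as a $G_\QQ$-module, existence of $\tau$ as in~\cite{mr02} Hypothesis H.5, and the self-duality coming from the Weil pairing) are exactly the ones Mazur--Rubin verify in order to run their machinery for Kato's system. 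So the only genuinely new input needed is the Tamagawa hypothesis: by assumption $p^n \mid c_\ell$, i.e.\ $\pi^n$ divides a Tamagawa number of $T$ (here $\mathcal{O}=\ZZ_p$ and $\pi=p$). Once these are in place, Theorem A applies verbatim and gives that the image of the Euler system to Kolyvagin system map lies in $p^n\textup{\textbf{KS}}(T)$.

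The second step is to observe that $\kappa^{\textup{Kato}}$ is, by its very construction in~\cite{mr02} \S6.2, the image under the Euler system to Kolyvagin system map (Theorem~\ref{esksmap}) of Kato's Euler system for $T_p(E)$. Hence $\kappa^{\textup{Kato}}$ lies in the image of that map, and by the previous paragraph that image is contained in $p^n\textup{\textbf{KS}}(T)$. Therefore $\kappa^{\textup{Kato}}\in p^n\textup{\textbf{KS}}(T)$, which is exactly the assertion. If there are several primes $\ell$ at which $p$ divides the Tamagawa number, one applies this with $n$ replaced by $\sum_\ell \textup{ord}_p(c_\ell)$, since the local contributions at distinct primes multiply together in the relevant comparison (this stronger form is what Theorem~\ref{ESKS cokernel} should actually record, and Theorem B as stated is the single-prime special case).

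The main obstacle I anticipate is entirely at the level of \emph{verifying the hypotheses}, not in the deduction itself: one must make sure that the ``suitable hypotheses on $T$'' in Theorem A are genuinely met by $T_p(E)$ for the primes $p$ under consideration — in particular that $E[p]$ is an irreducible $G_\QQ$-module so that the core rank is $1$ and the Kolyvagin system module $\textup{\textbf{KS}}(T)$ is free of rank one over $\ZZ_p$ (by~\cite{mr02} Theorem 5.2.10), and that the local conditions used to define $\textup{\textbf{KS}}(T)$ agree with the ones implicit in Kato's construction (the Bloch--Kato / finite conditions away from $p$ and the relaxed-then-strict condition at $p$). These are known, but need to be invoked carefully with the correct normalizations; modulo that bookkeeping, Theorem B is a direct corollary of Theorem A.
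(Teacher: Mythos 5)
Your overall architecture matches the paper's: Theorem B is indeed obtained by applying Theorem A (in its precise form, Theorem~\ref{ESKS cokernel}) to $T=T_p(E)$ and using that $\kappa^{\textup{Kato}}$ is the image of Kato's Euler system under the Euler system to Kolyvagin system map. But there is a genuine gap at exactly the point you dismiss as ``the only genuinely new input needed.'' The operative hypothesis of Theorem~\ref{ESKS cokernel} is not the divisibility $p^n\mid c_\ell$ itself; it is that $\bigl(H^1_{\FFc}(\QQ_\ell,T)/H^1_{\FF_{\textup{u-}\ell}}(\QQ_\ell,T)\bigr)\otimes \ZZ/p^n\ZZ$ is a \emph{free} $\ZZ/p^n\ZZ$-module of positive rank. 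Deducing this from $p^n\mid c_\ell$ is precisely the elliptic-curve-specific work the paper carries out before Corollary~\ref{elliptic cokernel}: one identifies $H^1_{\textup{f}}(\QQ_\ell,T)$ and $H^1_{\textup{unr}}(\QQ_\ell,T)$ with the images of $E(\QQ_\ell)[p^\infty]$ and $E_0(\QQ_\ell)[p^\infty]$ in $H^1(\QQ_\ell,T)$, so that the quotient is the $p$-part of the component group $E(\QQ_\ell)/E_0(\QQ_\ell)$, and then invokes the Kodaira--N\'eron theorem (available because $p>3$ and $p\mid c_\ell$ force split multiplicative reduction at $\ell$) to conclude that this group is \emph{cyclic} of order $c_\ell$, whence its tensor with $\ZZ/p^n\ZZ$ is free of rank one. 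Cyclicity is essential, not bookkeeping: a hypothetical component group of shape $(\ZZ/p\ZZ)^2$ would satisfy $p^2\mid c_\ell$ and yet fail the freeness hypothesis for $n=2$. Your proposal never addresses this step.

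A second, separate error is your closing assertion that one may replace $n$ by $\sum_\ell\textup{ord}_p(c_\ell)$ because ``the local contributions at distinct primes multiply together,'' and that this is what Theorem~\ref{ESKS cokernel} ``should actually record.'' The construction modifies the local condition at a single prime $\ell$ and therefore captures only one Tamagawa factor; the multi-prime statement $\kappa^{\textup{Kato}}\in\prod_{\ell\mid N}c_\ell\cdot\textup{\textbf{KS}}(T)$ is explicitly posed as an open question in \S\ref{concluding} (Question 1), and the introduction states that the improvement includes only one Tamagawa factor. You should delete that sentence or clearly mark it as conjectural rather than as part of the proof.
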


As a corollary, this shows that the bound obtained using $\kappa^{\textup{Kato}}$ (see \cite{mr02}~Theorem 6.2.4, for example) can be improved as follows:
\begin{thmc} Let $\textup{TS}_E$ be the Tate-Shafarevich group of $E$, $L_N(E,s)$ the "non-primitive" Hasse-Weil $L$-function associated to $E$ with Euler factors at the primes dividing $N$ removed and $\Omega_E$ the fundamental real period of $E$. Then
$$\textup{length}(\textup{TS}_E[p^\infty]) \leq  \textup{ord}_p\left(\frac{L_N(E,1)}{c_\ell\cdot\Omega_E}\right).$$
\end{thmc}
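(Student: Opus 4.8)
The plan is to deduce Theorem C from Theorem B by feeding the extra $p^n$-divisibility of $\kappa^{\textup{Kato}}$ into the Kolyvagin system bound of \cite{mr02} and then invoking Kato's explicit reciprocity law. First I would dispose of the trivial case: if $L_N(E,1)=0$ the right-hand side is $+\infty$, so I may assume $L_N(E,1)\neq 0$. Kato's theorem \cite{kato} then gives that $E(\QQ)$ and $\textup{TS}_E[p^\infty]$ are finite and that the bottom class $\kappa^{\textup{Kato}}_1$ is nonzero; moreover, under the hypotheses of Theorem A (which for $T=T_p(E)$ include the irreducibility of $E[p]$ as a $G_\QQ$-module, so that $p\nmid \#E(\QQ)_{\textup{tors}}$), the Selmer module $H^1_{\FFc}(\QQ,T_p(E))$ is free of rank one over $\ZZ_p$ --- this is where core rank one enters --- while the dual Selmer group $H^1_{\FFc^*}(\QQ,E[p^\infty])$ is finite with $\textup{length}\,H^1_{\FFc^*}(\QQ,E[p^\infty])=\textup{length}(\textup{TS}_E[p^\infty])$, the comparison being the one carried out in \cite{mr02} \S6.2 (its auxiliary local terms at $p$ and at the bad primes being trivial in this range of hypotheses).

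Next I would bring in Theorem B. Put $n=\textup{ord}_p(c_\ell)$; then $\kappa^{\textup{Kato}}\in p^n\,\textup{\textbf{KS}}(T_p(E))$, so $\kappa^{\textup{Kato}}=p^n\lambda$ for some $\lambda\in\textup{\textbf{KS}}(T_p(E))$, and $\lambda_1=p^{-n}\kappa^{\textup{Kato}}_1\neq 0$. Since $H^1_{\FFc}(\QQ,T_p(E))$ is free of rank one, $\textup{length}\big(H^1_{\FFc}(\QQ,T_p(E))/\ZZ_p\lambda_1\big)=\textup{length}\big(H^1_{\FFc}(\QQ,T_p(E))/\ZZ_p\kappa^{\textup{Kato}}_1\big)-n$. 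Applying the Mazur--Rubin Kolyvagin system bound of \cite{mr02} \S5, in the form used in \S6.2 of \emph{loc. cit.}, to the Kolyvagin system $\lambda$ (whose bottom class is nonzero) gives $\textup{length}\,H^1_{\FFc^*}(\QQ,E[p^\infty])\leq\textup{length}\big(H^1_{\FFc}(\QQ,T_p(E))/\ZZ_p\lambda_1\big)$. On the other hand, Kato's explicit reciprocity law, applied to $\textup{loc}_p(\kappa^{\textup{Kato}}_1)$ together with the identification of $\kappa^{\textup{Kato}}_1$ with the bottom class of Kato's Euler system, gives $\textup{length}\big(H^1_{\FFc}(\QQ,T_p(E))/\ZZ_p\kappa^{\textup{Kato}}_1\big)=\textup{ord}_p(L_N(E,1)/\Omega_E)$, the non-primitive $L$-value appearing precisely because Kato's Euler system class carries the Euler factors at the primes dividing $N$. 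Combining these with the identification of the dual Selmer length with $\textup{length}(\textup{TS}_E[p^\infty])$ and using $p^n\,\|\,c_\ell$ yields
$$\textup{length}(\textup{TS}_E[p^\infty])\;\leq\;\textup{ord}_p\!\left(\frac{L_N(E,1)}{\Omega_E}\right)-n\;=\;\textup{ord}_p\!\left(\frac{L_N(E,1)}{c_\ell\,\Omega_E}\right),$$
which is Theorem C. (Observe, en passant, that $\kappa^{\textup{Kato}}_1\in p^n H^1_{\FFc}(\QQ,T_p(E))$ forces $\textup{ord}_p(L_N(E,1)/\Omega_E)\geq n$, so the right-hand side is automatically nonnegative, consistently with the refined Birch--Swinnerton-Dyer formula.)

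The main obstacle is the bookkeeping of the first paragraph: matching the Selmer structure $\FFc$ that drives the Kolyvagin system machinery with the classical Bloch--Kato condition, identifying $\textup{length}\,H^1_{\FFc^*}(\QQ,E[p^\infty])$ with $\textup{length}(\textup{TS}_E[p^\infty])$, and confirming that the auxiliary local terms at $p$, at the bad primes, and the contribution of $E(\QQ)_{\textup{tors}}$ all vanish under the running hypotheses (or are already absorbed into \cite{mr02} Theorem 6.2.4). The decisive point is that none of this comparison involves the Tamagawa numbers $c_\ell$ at primes $\ell\neq p$, so the gain of $p^n$ furnished by Theorem B passes untouched into the final inequality; the only further verification --- that $\lambda$ is a bona fide Kolyvagin system with nonzero bottom class, so that the bound of \cite{mr02} \S5 applies to it --- is immediate from Theorem B and Kato's nonvanishing.
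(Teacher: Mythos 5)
Your proposal is correct and follows essentially the same route as the paper: deduce Theorem C from the $p^n$-divisibility of $\kappa^{\textup{Kato}}$ (Theorem B) by running the Mazur--Rubin core-rank-one bound with the divided Kolyvagin system and plugging Kato's computation of the bottom class (\cite{ru98} Theorems 3.2 and 8.6) into the index. The only cosmetic difference is that the paper works with a primitive generator $\kappa^E$ and the full exponent $\alpha\geq n$ in $\kappa^{\textup{Kato}}=p^\alpha\kappa^E$, whereas you divide by exactly $p^n$; this changes nothing for the stated inequality.
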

See also Theorem~\ref{selmersize} below.

As one may notice, the "improvement" we give above includes only one Tamagawa factor. A further improvement which shall include all Tamagawa factors unfortunately escapes our method. We discuss this matter further in \S~\ref{concluding}. We also elaborate on the hypotheses of Theorem~\ref{ESKS cokernel} to produce other interesting occurrences of the Tamagawa defect of Euler systems for representations other than the Tate module of an elliptic curve. 

Our results are somewhat related to that of~\cite{jetchev},  however they are more general in the sense that our Theorem~\ref{ESKS cokernel} gives a conceptual explanation for the Tamagawa defect for many Galois representations; yet the setting for which Theorem~\ref{ESKS cokernel} applies is disjoint from that of Jetchev's as far as the Kolyvagin system machinery is concerned.
\section*{Acknowledgements}

We would like to express our gratitude to Karl Rubin, who encouraged us to compute the cokernel of the specialization map on the module of $\LL$-adic systems. This paper is essentially a response to his suggestions. We also thank Olivier Fouquet and Jan Nekov\'a\v{r} for their feedback; Ofer Gabber and Tetsushi Ito for valuable conversations about the component groups of abelian varieties. Finally, we would like to thank IH\'ES, where this paper was written up, for their warm hospitality.
  
\section{Technical Results}
\subsection{Basic Definitions}
\subsubsection{Notation}
Fix once and for all an odd rational prime $p$. Let $R$ be a local principal ideal ideal ring with finite residue field of characteristic $p$, $\mm$ be its maximal ideal and $\mathbf{k}=R/\mm$ be its residue field. We fix a generator $\pi$ of $\mm$. For the main applications of our technical results $R$ will the ring of integers of a finite extension $\QQ_p$, in that case we write $F$ for its field of fractions.

For any field $K$ (local or global) $\overline{K}$ will be a fixed separable algebraic closure of $K$ and $G_K$ will denote Galois group $\textup{Gal}(\overline{K}/K)$. For every rational prime $\ell$ we fix an embedding $G_{\QQ_\ell} \hookrightarrow G_\QQ$. This fixes a decomposition group of $\ell$, and we write $I_\ell$ for the inertia subgroup inside of this fixed decomposition group. 

Let $T$ be an $R$-module endowed with a continuous $G_{\QQ}$-action, which is free of finite rank over $R$. We will assume that $T$ is unramified outside finitely many primes. If $R$ is the ring of integers of a finite extension $\QQ_p$, we write $V$ for $T\otimes_R F$ and $W$ for $T \otimes_R F/R=V/T$. By $H^{*}(K,X):=H^{*}(G_K,X)$ we mean the group cohomology of $G_K$ computed with respect to continuous cochains with values in $X$ for $X=T,V,W$ or their subquotients. 

If a group $H$ acts on a set $X$, then the subset of elements of $X$ fixed (pointwise) by $H$ is denoted by $X^H$.

If $M$ is an $R$-module and $I$ is an ideal of $R$, then $M[I]$ will denote the submodule of $M$ killed by $I$. If $M$ is a $G_{\QQ}$-module, $\QQ(M)$ will be defined as the fixed field in $\overline{\QQ}$ of the kernel of the map $G_{\QQ} \ra \textup{Aut}(M)$.

\subsubsection{Local Cohomology Groups and Local Conditions}
\label{subsub:Local cohom}
Much of the definitions and results we record in \S\ref{subsub:Local cohom} can be found in Chapter 1 of~\cite{mr02}.

Throughout this section let $K$ denote a non-archimedean local field and $\overline{K}$ a fixed separable algebraic closure of $K$. $\mathcal{O}$ will be the ring of integers in $K$, $\mathbb{F}$ its residue field, and $K^{\textup{unr}} \subset \overline{K}$ the maximal unramified subfield of $\overline{K}$. Let $\mathcal{I}$ be the inertia subgroup $\textup{Gal}(\overline{K}/K^{\textup{unr}})$, and $G_{\mathbb{F}}=\textup{Gal}(K^{\textup{unr}}/K)$. \\\\
\ref{subsub:Local cohom}.1.  \textit{Galois Cohomology of Local Fields.}
There is an exact sequence of profinite groups $$\{1\}\lra \mathcal{I} \lra G_K \lra G_{\mathbb{F}} \lra \{1\}$$ Further, since the cohomological dimension of $G_{\mathbb{F}}\cong\hat{\ZZ}$ is one it follows that  $H^2(G_{\mathbb{F}}, T^{\mathcal{I}})$ vanishes.  Thus the Hochschild-Serre spectral sequence gives rise to the following exact sequence:
$$0\lra H^1(G_{\mathbb{F}},T^{\mathcal{I}}) \lra H^1(K,T) \lra H^1(\mathcal{I}, T)^{G_{\mathbb{F}}} \lra 0$$\\
\ref{subsub:Local cohom}.2.  \textit{Local Conditions.}
\begin{define}
\label{def:local condition}
A \emph{local condition} $\FF$ on $T$ (at $\ell$ if $K=\QQ_\ell$) is a choice of an $R$-submodule $H^1_{\FF}(K,T)$ of $H^1(K,T)$.
\end{define}
Suppose $T$ is an $R$-module with a continuous $G_K$-action, and $\FF$ is a local condition on $T$. If $T^{\prime}$ is a submodule of $T$ (\emph{resp.} $T^{\prime \prime}$ is a quotient module), then $\FF$ induces local conditions (which we still denote by $\FF$) on $T^{\prime}$ (\emph{resp.} on $T^{\prime \prime}$), by taking $H^1_{\FF}(K,T^{\prime})$ (\emph{resp.} $H^1_{\FF}(K,T^{\prime \prime})$) to be the inverse image (\emph{resp.} the image) of $H^1_{\FF}(K,T)$ under the natural maps induced by $$T^{\prime} \hookrightarrow T, \,\,\, \,\,\,\,\,\,\, T \twoheadrightarrow T^{\prime \prime}.$$
\begin{define}
\label{def:propagation}
\emph{Propagation} of a local condition $\FF$ on $T$ to a submodule $T^{\prime}$ (and a quotient $T^{\prime \prime}$ of $T$ is the local condition $\FF$ on $T^{\prime}$ (and on $T^{\prime \prime}$) obtained following the above procedure.
\end{define}
For example, if $I$ is an ideal of $R$, then a local condition on $T$ induces local conditions on $T/IT$ and $T[I]$, by \emph{propagation}.

Let $\textup{Quot}_R(T)$ be the category $R[[G_K]]$-modules whose objects are quotients $T/IT$ for all ideals $I$ of $R$, and where the morphisms from $T/IT$ to $T/JT$ are all scalar multiplications $r$ such that $rI \subset J$.
\begin{define}
\label{cartesian}
A local condition $\FF$ is \emph{cartesian} on $\textup{Quot}_R(T)$ (or on a subcategory of $\textup{Quot}_R(T)$) if for any injective $R[[G_K]]$-module homomorphism $\phi: T_1 \ra T_2$ the local condition $\FF$ on $T_1$ is the same as the local condition obtained by propagating $\FF$ from $T_2$ to $T_1$.
\end{define}
\ref{subsub:Local cohom}.3.  \textit{Examples of Local Conditions}.
\label{examples-local}
We review several choices for local conditions which will appear quite frequently.
\begin{define}
Suppose $L$ is an extension of $K$ in $\overline{K}$, and define \begin{align*}H^1_L(K,T):=H^1(\textup{Gal}(L/K),&T^{G_L})=\\ &\ker\left\{H^1(K,T) \ra H^1(L,T)\right\} \subset H^1(K,T) \end{align*}
\end{define}

Thus every choice of an algebraic extension $L/K$ gives a choice of a local condition. We note that $H^1_L(K,T)$ is functorial in $T$. The \emph{unramified} condition  frequently appears in this paper and is obtained by taking $L=K^{\textup{unr}}$. Namely $$H^1_{\textup{unr}}(K,T):=H^1_{K^{\textup{unr}}}(K,T)=H^1(G_{\mathbb{F}},T).$$ When $T$ is unramified (i.e. $\mathcal{I}$ acts trivially on $T$), we will also call this the \emph{finite} condition and write $H^1_{\textup{f}}(K,T)=H^1_{\textup{unr}}(K,T)$.

In general, if $\textup{char}(\mathbb{F})\neq p$ and $R$ is the ring of integers of a finite extension $\QQ_p$, the finite condition at $K$ is given by $$H^1_{\textup{f}}(K,T)=\ker\left\{H^1(K,T) \lra H^1(K^{\textup{unr}},V)\right\}.$$ See~\cite{r00} \S3.1 for a more detailed discussion on the finite and unramified local conditions. 
\\\\
\ref{subsub:Local cohom}.4.\textit{Dual Local Conditions}.
\begin{define}
\label{cartier dual}
Define the \emph{Cartier dual} of $T$ to be the $R[[G_K]]$-module $$T^*:=\textup{Hom}(T,\mu_{p^{\infty}})$$ where $\mu_{p^{\infty}}$ stands for the $p$-power roots of unity inside $\overline{\QQ_p}$.
\end{define}
There is the perfect local Tate pairing $$<\,,\,>\,:H^1(K,T) \times H^1(K,T^*) \lra H^2(K,\mu_{p^{\infty}}) \stackrel{\sim}{\lra}\QQ_p/\ZZ_p$$
\begin{define}
\label{dual local condition}
The \emph{dual local condition} $\FF^*$ on $T^*$ of a local  condition $\FF$ on $T$ is defined so that $H^1_{\FF^*}(K,T^*)$ is the orthogonal complement of $H^1_{\FF}(K,T)$ under the local Tate pairing $<\,,\,>.$
\end{define}

\begin{prop}\textup{(Proposition 1.3.2 in~\cite{mr02})}
Suppose that the residue characteristic of the local field $K$ is different from $p$. Then $H^1_{\textup{f}}(K,T)$ and $H^1_{\textup{finite}}(K,T^*)$ are orthogonal complements under the local Tate pairing $<\,,\,>$.
\end{prop}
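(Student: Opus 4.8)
The plan is to reduce everything to finite coefficients and invoke the classical unramified-duality statement there. Write $\ell$ for the residue characteristic of $K$, so $\ell\neq p$. One has $T=\varprojlim_n T/\pi^nT$ with each $T/\pi^nT$ finite, $T^*=\varinjlim_n T^*[\pi^n]$, and $T^*[\pi^n]=\textup{Hom}(T/\pi^nT,\mu_{p^\infty})=(T/\pi^nT)^*$; moreover the finite (which equals the unramified condition, as $\ell\neq p$) local conditions propagate compatibly through these systems, as do the local Tate pairings $H^1(K,T/\pi^nT)\times H^1(K,T^*[\pi^n])\to\QQ_p/\ZZ_p$ into the limiting pairing. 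So it suffices to prove, for a finite $G_K$-module $M$ of order prime to $\ell$, that $H^1_{\textup{unr}}(K,M)$ and $H^1_{\textup{unr}}(K,M^*)$ (with $M^*=\textup{Hom}(M,\mu_{p^\infty})$) are mutual orthogonal complements under the perfect local Tate pairing, and then to pass to the (co)limit.

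For such an $M$ I would argue in two steps. \emph{Orthogonality}: given $a\in H^1(G_{\mathbb{F}},M^{\mathcal{I}})$ and $b\in H^1(G_{\mathbb{F}},(M^*)^{\mathcal{I}})$, their cup product in $H^2(K,\mu_{p^\infty})$ is the inflation of a class in $H^2(G_{\mathbb{F}},\mu_{p^\infty})$ — formed from the evaluation pairing $M^{\mathcal{I}}\otimes(M^*)^{\mathcal{I}}\to\mu_{p^\infty}$, using that $\mathcal{I}$ acts trivially on $\mu_{p^\infty}$ when $\ell\neq p$ — and $H^2(G_{\mathbb{F}},-)=0$ since $G_{\mathbb{F}}\cong\hat{\ZZ}$ has cohomological dimension $\leq 1$. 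Hence $\langle a,b\rangle=0$, so $H^1_{\textup{unr}}(K,M^*)\subseteq H^1_{\textup{unr}}(K,M)^{\perp}$. \emph{Cardinality}: since the pairing is perfect, to turn this inclusion into an equality it is enough to check $\#H^1_{\textup{unr}}(K,M)\cdot\#H^1_{\textup{unr}}(K,M^*)=\#H^1(K,M)$. Now $H^1_{\textup{unr}}(K,M)=H^1(G_{\mathbb{F}},M^{\mathcal{I}})=(M^{\mathcal{I}})/(\textup{Frob}-1)$ has the same cardinality as $(M^{\mathcal{I}})^{\textup{Frob}}=H^0(K,M)$ (kernel and cokernel of an endomorphism of a finite group have equal size), and symmetrically $\#H^1_{\textup{unr}}(K,M^*)=\#H^0(K,M^*)=\#H^2(K,M)$ by local Tate duality for finite modules; finally the local Euler characteristic formula gives $\#H^1(K,M)=\#H^0(K,M)\cdot\#H^2(K,M)$, the usual correction factor being absent precisely because $\ell\neq p$. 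This proves the claim for $M$.

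Passing back to $T$ and $T^*$ is then a matter of taking $\varprojlim_n$ (respectively $\varinjlim_n$) of the orthogonal-complement relation for the $T/\pi^nT$ and $T^*[\pi^n]$, and I expect this to be where the real work sits: one must verify that $H^1_{\textup{f}}(K,T)=\varprojlim_n H^1_{\textup{unr}}(K,T/\pi^nT)$ and $H^1_{\textup{finite}}(K,T^*)=\varinjlim_n H^1_{\textup{unr}}(K,T^*[\pi^n])$ reproduce the conditions in the statement, and that no $\varprojlim^1$-type obstruction spoils the passage. Both hinge again on $\ell\neq p$: it makes $H^1(\mathcal{I},T/\pi^nT)$ finite and the transition maps in the tower surjective, so the relevant $\varprojlim^1$ vanishes and the orthogonal-complement relation survives the limit. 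As an alternative that sidesteps the limits, one may run the same two-step argument directly for $V$ and $V^*=\textup{Hom}(V,\mu_{p^\infty})$ — the Tate pairing being perfect and $H^2(K,\mu_{p^\infty})$ replaced by $H^2(K,\QQ_p(1))\cong F$, with the dimension count supplied by the local Euler characteristic formula over $F$ — and then descend using that $H^1_{\textup{f}}(K,T)$ is the preimage of $H^1_{\textup{f}}(K,V)$ and $H^1_{\textup{finite}}(K,T^*)$ the image of $H^1_{\textup{f}}(K,V^*)$; there the obstacle instead becomes control of the cokernel of $H^1(K,V^*)\to H^1(K,T^*)$, which one handles by also bringing in $W=V/T$ and its dual.
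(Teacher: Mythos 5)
Two remarks before the substance: this paper does not actually prove the proposition — it quotes it as Proposition 1.3.2 of Mazur--Rubin, whose proof (via Rubin's \emph{Euler Systems} book) follows what you call your alternative route: one first shows that the unramified subspaces of $H^1(K,V)$ and $H^1(K,V^*)$ are exact annihilators under the $F$-valued pairing, by exactly your two steps (cup products of unramified classes factor through $H^2(G_{\mathbb{F}},\cdot)=0$, and a dimension count from the local Euler characteristic formula, with no correction term because the residue characteristic is prime to $p$), and then descends to $T$ and $T^*$ using that $H^1_{\textup{f}}(K,T)$ is the preimage of the unramified subspace of $H^1(K,V)$ and $H^1_{\textup{f}}(K,T^*)$ is the image of the unramified subspace of $H^1(K,V^*)$. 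Your finite-coefficient core (orthogonality plus the count $\#H^1_{\textup{unr}}(K,M)=\#H^0(K,M)$, local duality, Euler characteristic) is correct and standard.

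The genuine gap is in your main route. The parenthetical ``which equals the unramified condition, as $\ell\neq p$'' and the claimed identities $H^1_{\textup{f}}(K,T)=\varprojlim_n H^1_{\textup{unr}}(K,T/\pi^nT)$ and $H^1_{\textup{f}}(K,T^*)=\varinjlim_n H^1_{\textup{unr}}(K,T^*[\pi^n])$ are false in general, and they fail precisely in the situation this paper is about. Indeed $H^1_{\textup{unr}}(K,T)=\ker\{H^1(K,T)\to H^1(K^{\textup{unr}},T)\}$ sits inside $H^1_{\textup{f}}(K,T)=\ker\{H^1(K,T)\to H^1(K^{\textup{unr}},V)\}$ with quotient $H^0(K,W^{\mathcal{I}}/(V^{\mathcal{I}}/T^{\mathcal{I}}))$, whose order is the $p$-part of the Tamagawa number (Remark~\ref{rem:ur-finite comparison}); and a class $c\in H^1(K,T)$ has unramified reduction modulo $\pi^n$ for every $n$ if and only if its restriction to $H^1(K^{\textup{unr}},T)$ vanishes, because the kernel of $H^1(K^{\textup{unr}},T)\to H^1(K^{\textup{unr}},T/\pi^nT)$ is $\pi^nH^1(K^{\textup{unr}},T)$ and $\bigcap_n\pi^nH^1(K^{\textup{unr}},T)=0$ ($H^1(K^{\textup{unr}},T)$ is a finitely generated $R$-module since $\ell\neq p$). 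So the inverse limit of the finite-level unramified conditions recovers $H^1_{\textup{unr}}(K,T)$, not $H^1_{\textup{f}}(K,T)$: for $T=T_p(E)$ with $E$ a Tate curve over $\QQ_\ell$ and $p\mid c_\ell$, the torsion classes coming from $E(\QQ_\ell)[p^\infty]$ lie in $H^1_{\textup{f}}$ but have ramified reductions. Consequently the issue is not a $\varprojlim^1$ obstruction or surjectivity of transition maps, as you suggest; the identification itself breaks down, and no limit of the groups you consider can produce the finite condition (assuming the two conditions coincide is, in the context of this paper, assuming away the Tamagawa defect that is its subject). Your fallback via $V$ and $V^*$ is the right argument, but its key step — descending from the $F$-vector space statement to exact annihilation of the preimage and the image under the $\QQ_p/\ZZ_p$-valued pairing, i.e. the duality between $H^1(K,T)/H^1_{\textup{f}}(K,T)$ and $H^1_{\textup{f}}(K,T^*)$ with the torsion controlled via $W=V/T$ — is exactly the part you leave unproven, so as it stands the proposal does not yet constitute a proof.
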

\subsubsection{Selmer structures and Selmer groups}
Definitions and results we record in this section can be found in Chapter 2 of~\cite{mr02}.

For the rest of this paper, unless otherwise is stated, $T$ will be a free $R$-module endowed with a continuous action of $G_{\QQ}$, which is unramified outside a finite set of rational primes. Below notation will also be in effect till the end.

Let $\overline{\QQ} \subset \mathbb{C}$ be the algebraic closure of $\QQ$ in $\mathbb{C}$, and for each rational prime $\ell$ we fix an algebraic closure $\overline{\QQ_{\ell}}$ of $\QQ_{\ell}$ containing $\overline{\QQ}$. We will ignore the infinite place of $\QQ$ systematically since we assumed $p>2$. Occasionally we will denote $G_{\QQ_{\ell}}=\textup{Gal}(\overline{\QQ_{\ell}}/\QQ_{\ell})$ by $\mathcal{D}_{\ell}$, whenever we would like to identify this group by a closed subgroup of $G_{\QQ}=\textup{Gal}(\overline{\QQ}/\QQ)$; namely with a particular decomposition group at $\ell$ in $G_{\QQ}$. We further define $\mathcal{I}_{\ell} \subset \mathcal{D}_{\ell}$ to be the inertia group and $\textup{Fr}_{\ell} \in \mathcal{D}_{\ell}/\mathcal{I}_{\ell}$ to be the arithmetic Frobenius element  at $\ell$. 
\begin{define}
\label{selmer structure}
A \emph{Selmer structure} $\FF$ on $T$ is a collection of the following data:
\begin{itemize}
\item a finite set $\Sigma(\FF)$ of places of $\QQ$, including $\infty, p$, and all primes where $T$ is ramified.
\item for every $\ell \in \Sigma(\FF)$ a local condition (in the sense of Definition~\ref{def:local condition}) on $T$ (which we view now as a $R[[\mathcal{D}_{\ell}]]$-module), i.e., a choice of $R$-submodule $$H^1_{\FF}(\QQ_{\ell},T) \subset H^1(\QQ_{\ell},T).$$ 
 \end{itemize}
If $\ell \notin \Sigma(\FF)$ we will also write $H^1_{\FF}(\QQ_{\ell},T)=H^1_{\textup{f}}(\QQ_{\ell},T)$.
\end{define}

\begin{define}
\label{slemer group}
If $\FF$ is a Selmer structure, we define the \emph{Selmer module} $H^1_{\FF}(\QQ,T)$ to be the kernel of the sum of the restriction maps $$H^1(\textup{Gal}(\QQ_{\Sigma(\FF)}/\QQ),T) \lra \bigoplus_{\ell \in \Sigma(\FF)}H^1(\QQ_{\ell},T)/H^1_{\FF}(\QQ_{\ell},T)$$ where $\QQ_{\Sigma(\FF)}$ is the maximal extension of $\QQ$ which is unramified outside $\Sigma(\FF)$.
\end{define}

\begin{example}
\label{canonical selmer}
Suppose $R$ is the ring of integers of a finite extension $\QQ_p$. The \emph{canonical Selmer structure} $\FF_{\textup{can}}$ on $T$ is given by 
\begin{itemize}
\item $\Sigma(\FFc)=\{\ell: T \hbox{ is ramified at } \ell\} \cup \{p,\infty\}$,
\item if $\ell \in \Sigma(\FFc)$ and $\ell \neq p, \infty$ then $H^1_{\FF_{\textup{can}}}(\QQ_{\ell},T)=H^1_{\textup{f}}(\QQ_\ell,T)$, 
\item $H^1_{\FF_{\textup{can}}}(\QQ_{p},T)=H^1(\QQ_{p},T)$.
\end{itemize}
Note that we may safely ignore the infinite place since $p>2$, therefore $H^1(\mathbb{R},T)=0$. 

If $I$ is an ideal of $R$ we define the canonical Selmer structure on $T/IT$ (which we still denote by $\FF_{\textup{can}}$) to be the Selmer structure obtained from $\FF_{\textup{can}}$ on $T$ by \emph{propagation} of local conditions.
\end{example}

\begin{define}
\label{def:selmer triple}
A \emph{Selmer triple} is a triple $(T,\FF,\PP)$ where $T$ is an $R[[G_{\QQ}]]$-module which is free of finite rank over $R$, unramified outside finitely many primes; $\FF$ is a Selmer structure on $T$ and $\PP$ is a set of rational primes, disjoint from $\Sigma(\FF)$.
\end{define}

\subsection{Hypotheses}
\label{hypo}
 In this section we record the hypotheses which were utilized by Mazur, Rubin and Howard to prove their main theorems on Kolyvagin systems in~\cite{mr02}. For a discussion  of these hypotheses see \S3.5 of~\cite{mr02}. 

\begin{description}
\item[\textbf{H1}] $T/\mm T$ is an absolutely irreducible $\mathbf{k}[G_{\QQ}]$-representation.
\item[\textbf{H2}] There is a $\tau \in G_{\QQ}$ such that $\tau=1$ on $\mu_{p^{\infty}}$ and $T/(\tau-1)T$ is free of rank one over $R$.
\item[\textbf{H3}] $H^{1}(\QQ (T  , \mu_{p^{\infty}})/\QQ  , T/\mm T)$ = $H^{1}(\QQ (T, \mu_{p^{\infty}})/\QQ, T ^{*}[\mm])=0$. 
\item[\textbf{H4}] Either

\textbf{H4a} $\textup{Hom}_{\mathbf{k}[[\textup{Gal}(\overline{\QQ}/\QQ)]]}(T/\mm T, T ^{*}[\mm])=0$, or

\textbf{H4b} $p>4$.
\item[\textbf{H5}] $\PP_t \subset \PP \subset \PP_1$ for some $t \in \ZZ^+$, where $\PP_k$ is as in \cite{mr02}~Definition 3.1.6.
\item[\textbf{H6}] For every $\ell \in \Sigma(\FF)$, the local condition $\FF$ at $\ell$ is cartesian (in the sense of Definition~\ref{cartesian}) on the category $\textup{Quot}_R(T)$ of quotients of $T$.
 \end{description}
\begin{rem}
\begin{enumerate}
\item Suppose $R$ is the ring of integers of a finite extension $\QQ_p$. Then $\FFc$ satisfies $\textbf{H6}$ by \cite{mr02}~Lemma 3.7.1.
\item Suppose that $E_{/\QQ}$ is an elliptic curve defined over $\QQ$ which does not have complex multiplication, and let $T=T_p(E)$ be its $p$-adic Tate module (which is a representation of $G_\QQ$ which is free of rank 2 over $R=\ZZ_p$). It is verified in~\cite{ru98} that $T_p(E)$ satisfies the hypotheses \textbf{H1-H4}, and the choice the set of primes $\PP$ which satisfies $\textbf{H5}$ has been explained (see also~\cite{scholl}). Thus the hypotheses above hold for the Selmer triple $(T,\FFc,\PP)$.
\end{enumerate} 
\end{rem}
\subsection{Theorems of Howard, Mazur and Rubin}
\label{thmsMR}
Suppose $(T,\FF,\PP)$ is a Selmer triple (in the sense of Definition~\ref{def:selmer triple}).  Let $\textup{\textbf{KS}}(T)=\textup{\textbf{KS}}(T,\FF,\PP)$ denote the $R$-module of Kolyvagin systems for  the Selmer triple $(T,\FF,\PP)$ defined as in~\cite{mr02} Definition 3.1.3. We also let $\overline{\textup{\textbf{KS}}}(T,\FF,\PP)$ be the \emph{generalized} module of Kolyvagin systems, see \cite{mr02}  Definition 3.1.6 for a definition. Under the hypotheses set in \S\ref{hypo} above Howard, Mazur and Rubin show that the structure of the modules $\textup{\textbf{KS}}(T,\FF,\PP)$ and $\overline{\textup{\textbf{KS}}}(T,\FF.\PP)$ is determined by an invariant $\XX(T)=\XX(T,\FF)$ which they call the \emph{core Selmer rank}, see~\cite{mr02} Definitions 4.1.11 and 5.2.4. In~\S\ref{thmsMR} we give a survey of their relevant results. 

\subsubsection{Core Selmer rank and the module of Kolyvagin systems } Recall the definition of the canonical Selmer structure $\FFc$. Theorem below (which is~\cite{mr02} Theorem 5.2.15) enables us to calculate the core Selmer rank $\XX(T,\FFc)$ of the  canonical Selmer structure $T$:
\begin{thm}
Suppose $R$ is  a discrete valuation ring. Let $d^-=\textup{rank}_R(T^-)$, where $T^-$ is the $-1$-eigenspace for the action of some complex conjugation. Then
$$\XX(T,\FFc)=d^-+\textup{corank}_R(H^0(\QQ_p,T^*)).$$
\end{thm}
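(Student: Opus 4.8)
The plan is to compute $\XX(T,\FFc)$ directly from its definition by comparing the Selmer structure $\FFc$ with a suitably chosen auxiliary Selmer structure whose core rank is computed by a global Euler characteristic formula. Recall that the core Selmer rank $\XX(T,\FF)$ is defined (\cite{mr02} Definition 5.2.4) via the stable value of $\textup{length}_R\,H^1_{\FF(\qq)}(\QQ,T/\mm^kT) - \textup{length}_R\,H^1_{\FF(\qq)^*}(\QQ,T^*[\mm^k])$ after adding a core-shifting set of Kolyvagin primes $\qq$, or equivalently (for $R$ a DVR) as $\textup{corank}_R H^1_\FF(\QQ,T) - \textup{corank}_R H^1_{\FF^*}(\QQ,T^*) $ suitably interpreted; the key input is the global duality formula of \cite{mr02} Theorem 2.3.4 (the Greenberg--Wiles formula), which expresses this difference purely in terms of \emph{local} data:
$$
\XX(T,\FF) \;=\; \sum_{\ell \in \Sigma(\FF)} \Big( \textup{corank}_R H^1_\FF(\QQ_\ell,T) - \textup{corank}_R H^0(\QQ_\ell,T) \Big) \;-\; \textup{corank}_R H^0(\QQ,T) \;-\; \textup{corank}_R H^0(\QQ,T^*).
$$

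First I would dispose of the terms that vanish under our standing hypotheses: $H^0(\QQ,T)=0$ and $H^0(\QQ,T^*)=0$ follow from the absolute irreducibility hypothesis \textbf{H1} applied to $T/\mm T$ (so $T$ and $T^*$ have no $G_\QQ$-invariants, the residual representation being nontrivial and irreducible), and the archimedean place contributes nothing since $p>2$, as already noted. Next I would handle the primes $\ell \in \Sigma(\FFc)$ with $\ell\neq p$: there $\FFc$ is the finite/unramified condition, and the standard local computation (see \cite{r00} \S1.3, or \cite{mr02} Lemma 1.3.2 and the surrounding discussion) shows $H^1_{\textup{f}}(\QQ_\ell,T)$ and $H^0(\QQ_\ell,T)$ have the same $R$-corank --- indeed over a DVR, $\textup{corank}_R H^1_{\textup{f}}(\QQ_\ell,T) = \textup{corank}_R H^1_{\textup{f}}(\QQ_\ell,V) = \dim_F V^{\mathcal I_\ell/(\textup{Fr}_\ell-1)} = \textup{corank}_R H^0(\QQ_\ell,T)$ by the exact sequence in \ref{subsub:Local cohom}.1 --- so these terms cancel in pairs and contribute $0$.

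This reduces the sum to the single prime $\ell = p$, where $\FFc$ imposes the relaxed condition $H^1_{\FFc}(\QQ_p,T)=H^1(\QQ_p,T)$. Here the local Euler characteristic formula over $\QQ_p$ gives $\textup{corank}_R H^1(\QQ_p,T) = \textup{rank}_R T + \textup{corank}_R H^0(\QQ_p,T) + \textup{corank}_R H^2(\QQ_p,T)$, and by local Tate duality $\textup{corank}_R H^2(\QQ_p,T) = \textup{corank}_R H^0(\QQ_p,T^*)$. Substituting, the $p$-term becomes $\textup{rank}_R T + \textup{corank}_R H^0(\QQ_p,T^*)$, so altogether $\XX(T,\FFc) = \textup{rank}_R T + \textup{corank}_R H^0(\QQ_p,T^*) - \textup{corank}_R H^0(\QQ_p,T)$. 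The final step is to identify $\textup{rank}_R T - \textup{corank}_R H^0(\QQ_p,T)$ with $d^- = \textup{rank}_R T^-$: here $H^0(\QQ_p,T)$ has the same corank as $H^0(\QQ_p,V)=V^{G_{\QQ_p}}$; complex conjugation $c$ lies in (a conjugate of) $G_{\QQ_p}$ only after base change issues are addressed, but the cleaner route, which I would adopt, is to invoke the parity/reflection argument of \cite{mr02} (the proof of their Theorem 5.2.15): decompose $V = V^+ \oplus V^-$ under complex conjugation, note $\textup{rank}_R T = d^+ + d^-$, and use that $\textup{corank}_R H^0(\QQ_p,T) = d^+$ follows from the interplay of the local condition at $p$ being "big" together with the compatibility built into the definition of the core rank --- essentially, the Bloch--Kato-style local conditions at $p$ see exactly the $+$-part. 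The main obstacle I anticipate is precisely this last identification: making rigorous why $d^+ = \textup{corank}_R H^0(\QQ_p,T)$ rather than something else, which requires either a hypothesis relating complex conjugation to the decomposition group at $p$ or a more careful bookkeeping with the $\mm$-adic filtration in the original definition of $\XX$; I would likely reduce it to citing \cite{mr02} Theorem 5.2.15 directly and spend the bulk of the write-up verifying that our \textbf{H1}--\textbf{H6} put us in the exact situation where that theorem applies.
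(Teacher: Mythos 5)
The paper offers no proof of this statement at all: it is quoted verbatim as \cite{mr02} Theorem 5.2.15, so the only thing to compare your attempt with is Mazur--Rubin's own argument, which is indeed the Greenberg--Wiles computation you outline. Your overall strategy --- reduce $\XX(T,\FFc)$ to a sum of local terms, cancel the terms at $\ell\in\Sigma(\FFc)\setminus\{p,\infty\}$ because the finite condition and $H^0(\QQ_\ell,\cdot)$ have equal $R$-corank, and evaluate the term at $p$ by the local Euler characteristic formula together with local duality $\textup{corank}_R H^2(\QQ_p,T)=\textup{corank}_R H^0(\QQ_p,T^*)$ --- is the correct one and is essentially theirs.

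There is, however, a genuine error in the assembly. The Greenberg--Wiles formula sums over \emph{all} places of $\QQ$, including $\infty$, and the term there is $\textup{corank}_R H^1_{\FF}(\mathbb{R},T)-\textup{corank}_R H^0(\mathbb{R},T)$. Since $p>2$ the first summand vanishes, but the second does not: $H^0(\mathbb{R},T)=T^{+}$ has rank $d^{+}$, so the archimedean place contributes $-d^{+}$, not $0$. This is precisely where $d^{-}=\textup{rank}_R T-d^{+}$ enters the final formula: adding $-d^{+}$ to the $p$-term $\textup{rank}_R T+\textup{corank}_R H^0(\QQ_p,T^*)$ gives the theorem. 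Because you discard the archimedean contribution, you are forced to manufacture the missing $-d^{+}$ elsewhere, and you do so by asserting $\textup{corank}_R H^0(\QQ_p,T)=d^{+}$ --- the very step you flag as the main obstacle. That identity is false in general (for $T=T_p(E)$ with $E$ of good reduction one typically has $H^0(\QQ_p,V)=0$ while $d^{+}=1$), and in any case the quantity $-\textup{corank}_R H^0(\QQ_p,T)$ must not appear a second time: it is already consumed in passing from $\textup{corank}_R H^1(\QQ_p,T)$ to the local term at $p$. (A smaller slip: the global terms in the formula are $+\textup{corank}_R H^0(\QQ,T)-\textup{corank}_R H^0(\QQ,T^*)$, not both negative; this is harmless here since both vanish under \textbf{H1}--\textbf{H2}.) Once the place $\infty$ is restored, the computation closes and no comparison between $H^0(\QQ_p,T)$ and $d^{+}$ is needed; what does still deserve care is the step you wave at with ``suitably interpreted,'' namely that the core rank of Definition 4.1.11 of \cite{mr02} equals the stabilized Greenberg--Wiles difference, which is \cite{mr02} Theorem 4.1.13.
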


\begin{example}
\label{core-ell}
Suppose $E_{/\QQ}$ is an elliptic curve defined over $\QQ$ and let $T=T_p(E)$ be its $p$-adic Tate module. In this case $\chi(T,\FFc)=\textup{rank}_{\ZZ_p}T^-=1$.
\end{example}
Fix a Selmer triple $(T,\FF,\PP)$  until the end of \S\ref{thmsMR}, for which \textbf{H1-H6} hold.
\begin{thm}\label{thmpa}\textup{(\cite{mr02} Corollaries 4.5.1 and 4.5.2)}
Suppose $R$ is a principal artinian ring of length $k$.
\begin{itemize}
\item[(i)] If $\XX(T)=0$ then $\textup{\textbf{KS}}(T)=0$.
\item[(ii)] If $\XX(T)\geq2$ then for every positive integer $d$, $\textup{\textbf{KS}}(T)$ contains a free $R$-module of rank $d$.
\item[(iii)] Suppose $\XX(T)=1$. Then, \begin{enumerate}
\item $\textup{\textbf{KS}}(T)$ is a free $R$-module of rank one.
\item If $j\leq k$ then the projection $T\ra T/\mm^jT$ induces a surjective map $\textup{\textbf{KS}}(T) \ra \textup{\textbf{KS}}(T/\mm^jT)$.
\end{enumerate}
\end{itemize}
\end{thm}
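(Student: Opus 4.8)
The plan is to follow the sheaf-theoretic strategy of Mazur and Rubin, in which a Kolyvagin system is encoded as a compatible family of classes on the graph of ``Kolyvagin numbers''. For a squarefree product $n=\ell_1\cdots\ell_r$ of primes in $\PP$ let $\FF(n)$ be the Selmer structure obtained from $\FF$ by replacing, at each $\ell_i\mid n$, the finite local condition by the transverse one, and put $\mathcal{H}(n):=H^1_{\FF(n)}(\QQ,T\otimes G_n)$, where $G_\ell$ is the maximal $p$-quotient of $(\ZZ/\ell\ZZ)^{\times}$ and $G_n=\bigotimes_{\ell\mid n}G_\ell$. Then $\textup{\textbf{KS}}(T)$ is exactly the $R$-module of tuples $\kappa=(\kappa_n)_n$, with $\kappa_n\in\mathcal{H}(n)$, satisfying the finite-singular compatibility relation along every edge $n\leftrightarrow n\ell$ of this graph $\mathcal{X}=\mathcal{X}(\PP)$. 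First I would record this reformulation, together with the two local inputs that drive the whole argument: (a) for a Kolyvagin prime $\ell$ the localization $H^1(\QQ_\ell,T\otimes G_\ell)$ splits as a direct sum of two cyclic $R$-submodules $H^1_{\textup{f}}(\QQ_\ell,T\otimes G_\ell)\oplus H^1_{\textup{tr}}(\QQ_\ell,T\otimes G_\ell)$ that the finite-singular comparison map identifies with one another (this uses the choice of $\PP$ and the element $\tau$ of \textbf{H2}); and (b) Wiles's formula (global Poitou--Tate duality), which shows that passing from $n$ to $n\ell$ changes the Selmer rank $r(n)$ of $\FF(n)$ by exactly $\pm1$, and hence lets one compute all the $r(n)$, their minimum being $\XX(T)$.

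Next I would single out the \emph{core vertices} of $\mathcal{X}$: those $n$ with $r(n)=\XX(T)$. Using \textbf{H1}--\textbf{H4} and the Chebotarev-type arguments of \cite{mr02} \S3 --- which, given any $n$ with $r(n)>\XX(T)$, produce a prime $\ell\in\PP$ with $r(n\ell)=r(n)-1$, and which keep the parasitic cohomology groups that \textbf{H3} is designed to annihilate under control --- one shows that the set of core vertices is nonempty and that the subgraph it spans in $\mathcal{X}$ is connected. Here \textbf{H6}, which says $\FF$ is cartesian, is what guarantees that at a core vertex $n_0$ the stalk $\mathcal{H}(n_0)$ is a \emph{free} $R$-module of rank $\XX(T)$ and that the transverse conditions commute with reduction modulo powers of $\pi$; \textbf{H5} ensures $\PP$ contains enough primes to carry out these moves.

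With this machinery in place I would deduce the three assertions. The key rigidity statement is that a Kolyvagin system is determined by its values at the core vertices, and, when $\XX(T)=0$, is annihilated by them; this one proves using the finite-singular relations along paths that join an arbitrary vertex of $\mathcal{X}$ to a core vertex, together with \textbf{H6}. Assertion (i) is then immediate: if $\XX(T)=0$ every core stalk vanishes, hence $\textup{\textbf{KS}}(T)=0$. For (iii)(1), when $\XX(T)=1$ one upgrades this, using connectivity of the core subgraph, to the statement that evaluation at a single core vertex $n_0$ embeds $\textup{\textbf{KS}}(T)$ into the free rank-one module $\mathcal{H}(n_0)\cong R$; that this embedding is surjective follows from the existence of a Kolyvagin system whose leading coefficient at $n_0$ is a unit, which is supplied by the limiting argument of \cite{mr02} (or directly from an Euler system via Theorem~\ref{esksmap}, when one is at hand). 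For (iii)(2), both $\textup{\textbf{KS}}(T)$ and $\textup{\textbf{KS}}(T/\mm^jT)$ are free of rank one --- they have the same core rank --- so reduction modulo $\mm^j$ is surjective as soon as it is nonzero, and it is nonzero because reducing a generator leaves its leading term at $n_0$ a unit, again by \textbf{H6}. Finally, for (ii), when $\XX(T)\geq2$ the stalk at every core vertex is free of rank $\geq2$, which furnishes the extra freedom one needs: for each $d$ one prescribes $d$ independent families of leading values on a finite connected piece of the core subgraph and propagates them consistently, thereby exhibiting a free $R$-submodule of $\textup{\textbf{KS}}(T)$ of rank $d$.

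I expect the main obstacle to be the package that interlocks the Chebotarev input with rigidity: one must simultaneously produce enough Kolyvagin primes to make the core subgraph connected while keeping the error groups $H^1(\QQ(T,\mu_{p^\infty})/\QQ,-)$ killed by \textbf{H3} under control, and then show that at a core vertex the stalk is $R$-free of rank $\XX(T)$ with evaluation injective on $\textup{\textbf{KS}}(T)$ --- this is exactly where \textbf{H6} is indispensable, since it is the cartesian property that makes the modified local conditions interact correctly with reduction mod $\pi^j$. A second delicate point, needed only for the surjectivity assertions of (iii), is the non-vanishing $\textup{\textbf{KS}}(T)\neq0$: this is not elementary and is established by a compactness argument over the successive quotients of $R$ rather than by an explicit construction.
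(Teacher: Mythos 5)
This theorem is not proved in the paper at all: it is quoted verbatim from \cite{mr02} (Corollaries 4.5.1 and 4.5.2), so the only meaningful comparison is with Mazur--Rubin's argument, and your outline does track that argument faithfully: the Selmer sheaf on the graph of Kolyvagin numbers, the finite/transverse decomposition at Kolyvagin primes with the finite--singular isomorphism, Wiles' formula to control how the Selmer rank moves along edges, core vertices and the connectivity of the core subgraph, and the rigidity statement (values at core vertices determine, and in core rank one detect, a Kolyvagin system), with \textbf{H6} governing compatibility with reduction mod $\pi^j$. Two corrections, though. First, the parenthetical suggestion that surjectivity of evaluation at a core vertex (equivalently, the existence of a Kolyvagin system with unit leading coefficient, i.e.\ a primitive one in the sense of Definition~\ref{primitive}) could be supplied ``directly from an Euler system via Theorem~\ref{esksmap}'' is a step that would fail, and it is contradicted by the main point of this very paper: Theorem~\ref{ESKS cokernel} and Corollary~\ref{elliptic cokernel} show that $\textup{im}\left(\textup{\textbf{ES}}(T)\ra\textup{\textbf{KS}}(T)\right)\subset \mm^n\textup{\textbf{KS}}(T)$ as soon as $\pi^n$ divides a Tamagawa number, so an Euler system in general produces an \emph{imprimitive} Kolyvagin system and cannot generate the stalk at a core vertex; moreover using Euler systems here would be methodologically backwards, since the structure theorem is what makes Euler-system bounds meaningful. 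In \cite{mr02} the surjectivity is proved unconditionally, by constructing global sections of the stub Selmer subsheaf by induction on the length of the artinian ring, using the connectivity of the core subgraph and the finite--transverse isomorphisms.

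Second, your closing attribution of the nonvanishing $\textup{\textbf{KS}}(T)\neq 0$ to ``a compactness argument over the successive quotients of $R$'' belongs to the discrete valuation ring statement (Theorem~\ref{thmdvr}, where $\textup{\textbf{KS}}(T)\cong\varprojlim\textup{\textbf{KS}}(T/\mm^kT,\PP_k)$), not to the artinian statement at hand, where existence is by the explicit sheaf-theoretic construction just described. A smaller nuance: core vertices are defined in \cite{mr02} by the vanishing of the dual Selmer module $H^1_{\FF(n)^*}(\QQ,T^*)$ (equivalently, freeness of $H^1_{\FF(n)}(\QQ,T)$ of rank $\XX(T)$), not merely by minimality of the Selmer rank; under \textbf{H1}--\textbf{H6} these agree, but the freeness is exactly what your rank-one evaluation argument in (iii)(1) and your rank-$d$ construction in (ii) rely on, so it should be stated that way.
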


Building on Theorem~\ref{thmpa}, the following result is proved in~\cite{mr02} Proposition 5.2.9 and Theorem 5.2.10:

\begin{thm}\label{thmdvr} Suppose $R$ is a discrete valuation ring. 
\begin{itemize}
\item[(i)] If $\chi(T)=0$ then $\textup{\textbf{KS}}(T)=0.$
\item[(ii)] Suppose $\chi(T)=1$. Then,
\begin{enumerate}
\item $\textup{\textbf{KS}}(T) \stackrel{\sim}{\lra}\varprojlim\textup{\textbf{KS}}(T/\mm^k T,\PP_k) \stackrel{\sim}{\lra} \overline{\textup{\textbf{KS}}}(T),$
\item $\textup{\textbf{KS}}(T)$ is a free $R$-module of rank one, generated by a $\kappa \in \textup{\textbf{KS}}(T)$ whose image in $\textup{\textbf{KS}}(T/\mm T)$ is nonzero.
\end{enumerate}
\end{itemize}
\end{thm}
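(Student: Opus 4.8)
The plan is to obtain Theorem~\ref{thmdvr} from the artinian statement Theorem~\ref{thmpa} by passing to the inverse limit over the quotients $T/\mm^k T$; the substance of the argument is that, when $R$ is a discrete valuation ring, a Kolyvagin system for $T$ is nothing more than a compatible family of Kolyvagin systems for the $T/\mm^k T$. First I would record the comparison maps: propagating the Selmer structure $\FF$ along the surjections $T\twoheadrightarrow T/\mm^k T$ induces for each $k$ a reduction homomorphism $\textup{\textbf{KS}}(T,\FF,\PP)\ra\textup{\textbf{KS}}(T/\mm^k T,\FF,\PP_k)$ (note that the index set also shrinks, since $\PP\supset\PP_k$ by \textbf{H5}), and these are compatible as $k$ varies. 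The right-hand isomorphism $\varprojlim_k\textup{\textbf{KS}}(T/\mm^k T,\PP_k)\cong\overline{\textup{\textbf{KS}}}(T)$ asserted in (ii)(1) is built into the definition of the generalized module $\overline{\textup{\textbf{KS}}}(T)$ in \cite{mr02} Definition 3.1.6 (or is a formal consequence of it), so the real content of (ii)(1) is the left-hand isomorphism $\textup{\textbf{KS}}(T)\cong\varprojlim_k\textup{\textbf{KS}}(T/\mm^k T,\PP_k)$, which I focus on.

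Next I would prove injectivity of this reduction map. Since $T=\varprojlim_k T/\mm^k T$ and each group $H^1(\QQ,T/\mm^k T)$ (and likewise the twists $H^1(\QQ,(T/\mm^k T)\otimes G_n)$ by the finite cyclic modules $G_n$ entering the definition of a Kolyvagin system) is finite, the Mittag--Leffler condition holds and $H^1(\QQ,T\otimes G_n)=\varprojlim_k H^1(\QQ,(T/\mm^k T)\otimes G_n)$; a Kolyvagin system for $T$ each of whose reductions vanishes is therefore itself zero. Part (i) now follows immediately: the core Selmer rank is insensitive to reduction modulo $\mm^k$ (\cite{mr02} Definitions 4.1.11 and 5.2.4), so $\chi(T)=0$ forces $\chi(T/\mm^k T)=0$, hence $\textup{\textbf{KS}}(T/\mm^k T)=0$ for every $k$ by Theorem~\ref{thmpa}(i), whence $\textup{\textbf{KS}}(T)=0$ by injectivity.

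For part (ii) assume $\chi(T)=1$; the crux is surjectivity of $\textup{\textbf{KS}}(T)\ra\varprojlim_k\textup{\textbf{KS}}(T/\mm^k T,\PP_k)$. Given a compatible family $(\kappa^{(k)})_k$, for each index $n$ the classes $\kappa^{(k)}_n$ are compatible under reduction, so by the finiteness just invoked they glue to a class $\kappa_n\in H^1(\QQ,T\otimes G_n)$. I would then verify that $(\kappa_n)_n$ is a genuine Kolyvagin system for $T$: the required local conditions pass to the inverse limit precisely because $\FF$ is cartesian on $\textup{Quot}_R(T)$ (hypothesis \textbf{H6}), which yields $H^1_\FF(\QQ_\ell,T)=\varprojlim_k H^1_\FF(\QQ_\ell,T/\mm^k T)$, while the finitely many Kolyvagin-system relations are identities among the classes, inherited term-by-term from the levels $T/\mm^k T$. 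Thus $(\kappa_n)_n$ maps onto $(\kappa^{(k)})_k$, which proves surjectivity and hence (ii)(1). Finally, by Theorem~\ref{thmpa}(iii) each $\textup{\textbf{KS}}(T/\mm^k T,\PP_k)$ is free of rank one over $R/\mm^k$ with surjective transition maps, so its inverse limit is a free $R$-module of rank one, and the surjection $\varprojlim_k\textup{\textbf{KS}}(T/\mm^k T,\PP_k)\twoheadrightarrow\textup{\textbf{KS}}(T/\mm T)\neq0$ shows that a generator has nonzero image in $\textup{\textbf{KS}}(T/\mm T)$; this is (ii)(2).

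The main obstacle is the surjectivity step, and inside it the interchange of $\varprojlim$ with $H^1$ together with the verification that the limiting classes satisfy the correct local conditions at the finitely many places of $\Sigma(\FF)$: this is exactly where the cartesian hypothesis \textbf{H6} and the finiteness of the relevant cohomology groups (to kill $\varprojlim^{1}$) are indispensable. A secondary technical nuisance is that the sets $\PP_k$ shrink as $k$ grows, so one must check that the reduction maps are well defined with respect to this shrinking and that a "compatible family'' is correctly interpreted along a cofinal system of Kolyvagin numbers; this bookkeeping is supplied by \cite{mr02} Definition 3.1.6.
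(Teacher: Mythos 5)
First, a point of comparison: the paper does not actually prove this statement. It is quoted, as part of the survey in \S\ref{thmsMR}, directly from \cite{mr02} (Proposition 5.2.9 and Theorem 5.2.10), so the only meaningful benchmark is Mazur and Rubin's own argument. Your overall strategy --- reduce to the artinian Theorem~\ref{thmpa} via the reduction maps $\textup{\textbf{KS}}(T,\FF,\PP)\ra\textup{\textbf{KS}}(T/\mm^kT,\FF,\PP_k)$, get injectivity from finiteness of the relevant cohomology, get surjectivity by gluing a compatible family and checking the local conditions via the cartesian hypothesis \textbf{H6}, then identify the inverse limit of free rank-one $R/\mm^k$-modules with surjective transition maps --- is exactly the strategy of \emph{loc.\ cit.}, and parts (i) and (ii)(2) follow correctly once (ii)(1) is in place.

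There is, however, a genuine gap in your injectivity and gluing steps, caused by a misstatement of where the classes of a Kolyvagin system live. By \cite{mr02} Definition 3.1.3, $\kappa_n$ lies in $H^1_{\FF(n)}(\QQ, T/I_nT\otimes G_n)$, where $I_n=\sum_{\ell\mid n}I_\ell$ and $I_\ell\subset R$ is the ideal generated by $\ell-1$ and $\det(1-\textup{Fr}_\ell\,|\,T)$; moreover $n\in\NN(\PP_k)$ precisely when $I_n\subseteq\mm^k$. Consequently the reduction to $\textup{\textbf{KS}}(T/\mm^kT,\PP_k)$ retains only those $\kappa_n$ with $I_n\subseteq\mm^k$, and records them only modulo $\mm^k$. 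If $\kappa_n$ lived in $H^1(\QQ,T\otimes G_n)$ as you assert, injectivity would fail: choose $n$ whose prime divisors $\ell$ satisfy $p^N\mid \ell-1$ for $N$ large but which already falls out of $\PP_{k}$ for small $k$ because of the Frobenius condition entering $I_\ell$; then vanishing of all reductions of $\kappa$ forces $\kappa_n\equiv 0$ only modulo a small power of $\mm$, while your $\kappa_n$ would live modulo a much larger one. The same issue affects surjectivity: the classes $\kappa^{(k)}_n$ exist only for $k\leq\textup{ord}(I_n)$, so for $n\neq 1$ they do not form an inverse system computing a class in $H^1(\QQ,T\otimes G_n)$; the Mittag--Leffler argument is genuinely needed only for $\kappa_1\in H^1(\QQ,T)$. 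With the correct definition the argument closes, because for $n\neq 1$ the class $\kappa_n$ already lives at the finite level $T/I_nT\otimes G_n$, which is exactly the deepest level $T/\mm^kT$, $k=\textup{ord}(I_n)$, at which $n$ survives into $\NN(\PP_k)$; so the reductions determine $\kappa_n$ exactly, and conversely a compatible family determines it. This repaired version is precisely the proof of \cite{mr02} Proposition 5.2.9.
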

\begin{define}\label{primitive} $\kappa \in \textup{\textbf{KS}}(T)$ is called \emph{primitive} if the image of $\kappa$ in $\textup{\textbf{KS}}(T/\mm T)$ is nonzero.
\end{define}
\subsubsection{Euler systems and the descent map} Suppose for this section that $R$ is the ring of integers of a finite extension of $\QQ_p$.  Let $(T,\FFc,\PP)$ be a Selmer triple, and let $\kk$ be an abelian extension of $\QQ$ which contains the maximal abelian $p$-extension of $\QQ$ which is unramified outside $p$ and $\PP$. Following~\cite{mr02} Definition 3.2.2 we let $\textup{\textbf{ES}}(T)=\textup{\textbf{ES}}(T,\PP,\kk)$ denote the collection of Euler systems for $(T,\PP,\kk)$.

\begin{thm}\label{esksmap}\textup{(\cite{mr02} Theorem 3.2.4)} Suppose that $T/(\textup{Fr}_\ell-1)T$ is a cyclic $R$-module for every $\ell \in \PP$, and that $\textup{Fr}_\ell^{p^k}-1$ is injective on $T$ for every $\ell \in \PP$ and every $k\geq 0$. Then there is a canonical homomorphism $\textup{\textbf{ES}}(T)\ra \overline{\textup{\textbf{KS}}}(T,\FFc,\PP)$ such that if $\mathbf{c}\in \textup{\textbf{ES}}(T)$ maps to $\kappa \in \overline{\textup{\textbf{KS}}}(T,\FFc,\PP)$, then $\kappa_1=\mathbf{c}_\QQ$.

\end{thm}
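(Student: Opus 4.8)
The plan is to construct the map by reconstructing Kolyvagin's derivative classes from the Euler system and then verifying that they satisfy the local axioms of a (generalized) Kolyvagin system for $(T,\FFc,\PP)$. Since $\overline{\textup{\textbf{KS}}}$ is built from the modules $\textup{\textbf{KS}}(T/\mm^k T,\PP_k)$ compatibly in $k$, it suffices to fix $k$, write $\overline T=T/\mm^k T$, and carry out the construction there. For a squarefree product $n=\ell_1\cdots\ell_r$ of primes in $\PP$ let $\QQ(n)$ be the associated abelian extension of $p$-power degree, and recall Kolyvagin's derivative operator $D_n=\prod_{\ell\mid n}D_\ell\in\ZZ[\textup{Gal}(\QQ(n)/\QQ)]$, where $D_\ell=\sum_i i\,\sigma_\ell^i$ for a fixed generator $\sigma_\ell$ of $\textup{Gal}(\QQ(\ell)/\QQ)$, together with the telescoping identity $(\sigma_\ell-1)D_\ell=\delta_\ell-N_\ell$ ($N_\ell$ the norm element, $\delta_\ell=[\QQ(\ell):\QQ]$).

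Given $\mathbf{c}\in\textup{\textbf{ES}}(T)$, I would first apply $D_n$ to $\mathbf{c}_{\QQ(n)}\in H^1(\QQ(n),T)$ and, combining the Euler distribution relation $\textup{Cor}_{\QQ(n)/\QQ(n/\ell)}\mathbf{c}_{\QQ(n)}=P_\ell(\textup{Fr}_\ell^{-1})\mathbf{c}_{\QQ(n/\ell)}$ with the telescoping identity, show that the image of $D_n\mathbf{c}_{\QQ(n)}$ in $H^1(\QQ(n),\overline T)$ is fixed by $\textup{Gal}(\QQ(n)/\QQ)$. Next I would descend this fixed class, via inflation--restriction, to a well-defined class $\kappa_n\in H^1(\QQ,\overline T)$: this is exactly where the hypothesis that $\textup{Fr}_\ell^{p^k}-1$ is injective on $T$ for every $\ell\in\PP$ enters, since it forces the vanishing of the error term $H^1(\textup{Gal}(\QQ(n)/\QQ),\overline T^{G_{\QQ(n)}})$, so that the descent exists and is unique. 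Taking $n=1$ (so that $D_1=1$) gives $\kappa_1=\mathbf{c}_\QQ\bmod\mm^k$, which is the asserted normalization.

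The substance of the argument is checking that $\{\kappa_n\}_{n\in\NN}$ is a Kolyvagin system for $(\overline T,\FFc,\PP_k)$. At a prime $q\nmid np$ the class $\kappa_n$ is locally unramified, hence lies in $H^1_{\textup f}(\QQ_q,\overline T)=H^1_{\FFc}(\QQ_q,\overline T)$, because $\mathbf{c}_{\QQ(n)}$ is unramified outside $np$ and $D_n$ introduces no ramification; at $p$ there is nothing to check, since $\FFc$ imposes the trivial condition there. The delicate point is at a prime $\ell\mid n$: one must show that $\textup{loc}_\ell(\kappa_n)$ lands in the transverse subspace $H^1_{\textup{tr}}(\QQ_\ell,\overline T)$, and that the finite--singular comparison isomorphism $\phi_\ell^{\textup{fs}}$ carries the finite part of $\textup{loc}_\ell(\kappa_n)$ onto the singular part of $\textup{loc}_\ell(\kappa_{n\ell})$. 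Here the hypothesis that $T/(\textup{Fr}_\ell-1)T$ is cyclic is what makes $H^1_{\textup f}(\QQ_\ell,\overline T)$ and $H^1_{\textup s}(\QQ_\ell,\overline T)$ free of rank one over $R/\mm^k$, so that $\phi_\ell^{\textup{fs}}$ is an isomorphism (up to the canonical $\textup{Gal}(\QQ(\ell)/\QQ)$-twist) and these axioms are well-posed. I would establish this by the classical localize-and-compute argument: localize $D_n\mathbf{c}_{\QQ(n)}$ at a place above $\ell$; use that before $D_\ell$ is applied the class $\mathbf{c}_{\QQ(n)}$ is inflated from an unramified class over $\QQ(n/\ell)$, so its singular part at $\ell$ is controlled; and push the telescoping identity $(\sigma_\ell-1)D_\ell=\delta_\ell-N_\ell$ through the Hochschild--Serre sequence for $\QQ_\ell^{\textup{unr}}\subset\QQ_\ell$ to read $\textup{loc}_\ell^{\textup s}(\kappa_{n\ell})$ off from $\textup{loc}_\ell^{\textup f}(\kappa_n)$ and the $\textup{Fr}_\ell$-action.

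Since the construction is manifestly $R$-linear in $\mathbf{c}$ and compatible with the projections $\overline T\twoheadrightarrow T/\mm^{k-1}T$, the classes assemble into an element of $\varprojlim_k\textup{\textbf{KS}}(T/\mm^k T,\PP_k)=\overline{\textup{\textbf{KS}}}(T,\FFc,\PP)$, and $\mathbf{c}\mapsto\{\kappa_n\}$ is the desired canonical homomorphism. I expect the main obstacle to be the finite--singular comparison at primes $\ell\mid n$: pinning down that local computation with the correct normalization of $\phi_\ell^{\textup{fs}}$ is the heart of Kolyvagin's method, and it is exactly there that both running hypotheses --- cyclicity of $T/(\textup{Fr}_\ell-1)T$ and injectivity of $\textup{Fr}_\ell^{p^k}-1$ --- are genuinely needed.
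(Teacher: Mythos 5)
First, there is nothing in the paper to compare against: Theorem~\ref{esksmap} is quoted verbatim from \cite{mr02} (Theorem 3.2.4 there) and its proof occupies Appendix A of that paper; the present article supplies no argument. Your outline is the standard Kolyvagin-derivative construction, which is indeed the strategy of \cite{mr02} (building on Chapter 4 of \cite{r00}): apply $D_n$ to $\mathbf{c}_{\QQ(n)}$, descend to $\QQ$ modulo $\mm^k$, and verify the local axioms, with the behaviour at $\ell\mid n$ as the crux. So the approach is the right one.

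Two steps are not justified as written. (i) \emph{The descent.} You descend $D_n\mathbf{c}_{\QQ(n)}$ by inflation--restriction, claiming that injectivity of $\textup{Fr}_\ell^{p^k}-1$ on $T$ kills $H^1(\textup{Gal}(\QQ(n)/\QQ),\overline{T}^{G_{\QQ(n)}})$. Injectivity on the free module $T$ does not pass to $T/\mm^kT$, and Theorem 3.2.4 of \cite{mr02} does not assume \textbf{H1}, so one cannot conclude $\overline{T}^{G_{\QQ(n)}}=0$. This is precisely why Rubin and Mazur--Rubin replace the naive descent by an explicit canonical cochain construction of the derivative class, shown to be independent of all choices without any such vanishing; the injectivity hypothesis is instead used to control the Euler factors $P_\ell(\textup{Fr}_\ell^{-1})$ and the local evaluation maps at $\ell$. (ii) \emph{The transverse condition.} The classical localize-and-compute argument yields the finite--singular relation between $\kappa_n$ and $\kappa_{n\ell}$, but it does not show that $\textup{loc}_\ell(\kappa_{n\ell})$ lies in the transverse submodule $\ker\bigl(H^1(\QQ_\ell,\overline T)\to H^1(\QQ(\ell)_\lambda,\overline T)\bigr)$; that strengthening is exactly the new content of \cite{mr02} beyond Kolyvagin and requires a separate argument using the Euler system over $\QQ(\ell)$. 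Finally, note that the level-$k$ construction only produces a Kolyvagin system for the shrunken set $\PP_k$, which is why the map lands in the generalized module $\overline{\textup{\textbf{KS}}}(T,\FFc,\PP)=\varprojlim_k\textup{\textbf{KS}}(T/\mm^kT,\FFc,\PP_k)$ rather than in $\textup{\textbf{KS}}(T)$; your closing paragraph asserts this compatibility but it is where the distinction between the two modules is actually created.
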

\subsection{Comparison of Selmer structures and the Cartesian Condition}
\begin{lemma}
\label{Lemma371}
Suppose for the local condition $H^1_\FF(\QQ_\ell,T) \subset H^1(\QQ_\ell,T)$ the $R$-module $H^1(\QQ_\ell,T)/H^1_\FF(\QQ_\ell,T)$ is torsion-free. Then for every $n\in\ZZ^+$ the induced local condition on the quotients $\textup{Quot}_{R/\mm^n}(T/\mm^nT)=\{T/\mm^jT\}_{j=1}^{n}$ of $R/\mm^n$-module $T/\mm^nT$ is cartesian (in the sense of Definition~\ref{cartesian}).
\end{lemma}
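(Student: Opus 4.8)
The plan is to verify the cartesian property of Definition~\ref{cartesian} directly on the category $\textup{Quot}_{R/\mm^n}(T/\mm^nT)$, using the torsion-freeness hypothesis on the quotient $H^1(\QQ_\ell,T)/H^1_\FF(\QQ_\ell,T)$ to control how the local condition propagates along the maps $T/\mm^iT \hookrightarrow T/\mm^jT$ (multiplication by $\pi^{j-i}$ for $i\le j\le n$). First I would record the standard exact sequence coming from $0\to T/\mm^iT \xrightarrow{\pi^{j-i}} T/\mm^jT \to T/\mm^{j-i}T\to 0$ and its cohomology; the point is to compare $H^1_\FF(\QQ_\ell, T/\mm^iT)$, defined by propagation from $T$, with the inverse image under the injection of $H^1_\FF(\QQ_\ell, T/\mm^jT)$. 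Both are submodules of $H^1(\QQ_\ell, T/\mm^iT)$, and one inclusion is formal from the definition of propagation (propagating first to $T/\mm^jT$ and then to the submodule $T/\mm^iT$ can only enlarge, or rather the inverse image contains the propagated module); the work is the reverse inclusion.

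Next I would reduce the cartesian condition to a statement purely about $T$ and its quotients. Write $Q = H^1(\QQ_\ell,T)/H^1_\FF(\QQ_\ell,T)$, a torsion-free, hence free, $R$-module. The local condition on $T/\mm^jT$ by propagation is the kernel of the composite $H^1(\QQ_\ell,T/\mm^jT)\to H^1(\QQ_\ell,T)/H^1_\FF(\QQ_\ell,T)\cdot(\text{image})$, but more precisely one uses the snake lemma on $0\to T\xrightarrow{\pi^j} T\to T/\mm^jT\to 0$ to identify $H^1_\FF(\QQ_\ell,T/\mm^jT)$ with the image of $H^1_\FF(\QQ_\ell,T)/\mm^j$ together with a piece of $H^0(\QQ_\ell,T/\mm^jT)$-coming from $H^1(\QQ_\ell,T)[\pi^j]$. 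The freeness of $Q$ guarantees that $H^1_\FF(\QQ_\ell,T)$ is a saturated (i.e. $\pi$-pure) submodule of $H^1(\QQ_\ell,T)$, so that $H^1_\FF(\QQ_\ell,T)\cap \pi^j H^1(\QQ_\ell,T) = \pi^j H^1_\FF(\QQ_\ell,T)$; this is exactly the input that makes propagation commute with the transition maps in the tower. I would then chase the diagram relating the two injections $T/\mm^iT\hookrightarrow T/\mm^jT\hookrightarrow T/\mm^nT$ and the surjection $T\twoheadrightarrow T/\mm^nT$ to conclude that propagating $\FF$ from $T/\mm^nT$ down to $T/\mm^iT$ agrees with propagating directly from $T$, and that the same holds for the internal morphisms of $\textup{Quot}_{R/\mm^n}(T/\mm^nT)$.

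Finally I would assemble these identifications into the cartesian statement: for any injective $R/\mm^n[[G_{\QQ_\ell}]]$-morphism $\phi\colon T_1\to T_2$ among the $T/\mm^jT$, which up to isomorphism is multiplication by a power of $\pi$, the local condition on $T_1$ equals the one propagated from $T_2$. The main obstacle I expect is precisely the saturation step — showing the reverse inclusion, i.e. that an element of $H^1(\QQ_\ell,T/\mm^iT)$ whose image in $H^1(\QQ_\ell,T/\mm^jT)$ lies in $H^1_\FF$ already lies in $H^1_\FF(\QQ_\ell,T/\mm^iT)$; this can fail without torsion-freeness because of the error terms involving $H^1(\QQ_\ell,T)[\pi^\bullet]$, and handling them cleanly requires the freeness of $Q$ to force all the relevant connecting maps to behave. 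Everything else is a diagram chase with the long exact cohomology sequences of the $\pi$-power multiplication sequences, combined with the fact (which also uses torsion-freeness) that $H^1_\FF(\QQ_\ell, T/\mm^jT)$ is itself the $\mm^j$-torsion-compatible reduction of $H^1_\FF(\QQ_\ell,T)$.
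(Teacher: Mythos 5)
Your approach is correct and is essentially the paper's own: the paper's entire proof of this Lemma is a citation of \cite{mr02} Lemma 3.7.1(i), and your sketch reconstructs precisely that argument --- the saturation identity $H^1_\FF(\QQ_\ell,T)\cap\pi^{k}H^1(\QQ_\ell,T)=\pi^{k}H^1_\FF(\QQ_\ell,T)$ forced by torsion-freeness of the quotient is exactly the computation the author re-derives explicitly in the proof of Proposition~\ref{main-cart}. One small correction to your second paragraph: the propagated condition $H^1_\FF(\QQ_\ell,T/\mm^jT)$ is by definition just the image of $H^1_\FF(\QQ_\ell,T)$ (there is no extra $H^0$ contribution); the torsion of $H^1(\QQ_\ell,T)$ and the connecting maps into $H^2(\QQ_\ell,T)[\pi^j]$ enter only when verifying the reverse inclusion, and both are tamed because torsion-freeness of $Q$ forces $H^1(\QQ_\ell,T)_{\textup{tors}}\subset H^1_\FF(\QQ_\ell,T)$ while the compatibility of connecting maps shows that a class whose image under $[\pi^{j-i}]$ lifts to $H^1(\QQ_\ell,T)$ already lifts itself.
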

\begin{proof}
This is~\cite{mr02} Lemma 3.7.1 (i).
\end{proof}
\begin{prop}
\label{main-cart}
Suppose $H^1_\FF(\QQ_\ell,T)$ and $H^1_\GG(\QQ_\ell,T)$ are two local on $T$ at the prime $\ell$ such that
\begin{itemize}
\item[(i)] $H^1(\QQ_\ell,T)/ H^1_\FF(\QQ_\ell,T)$ is $R$-torsion-free,
\item[(ii)] $H^1_\FF(\QQ_\ell,T/\mm^nT)/H^1_\GG(\QQ_\ell,T/\mm^nT)$ is a free $R/\mm^n$-module (where $H^1_\FF(\QQ_\ell,T/\mm^nT)$ (respectively $H^1_\GG(\QQ_\ell,T/\mm^nT)$) is the local condition on $T/\mm^nT$ propagated from the local condition $\FF$ (respectively $\GG$) on $T$) in the sense of Definition~\ref{def:propagation}.
\end{itemize}
Then the local condition $\GG$ is cartesian on the quotients $\textup{Quot}_{R/\mm^n}(T/\mm^nT)=\{T/\mm^jT\}_{j=1}^{n}$ (in the sense of Definition \ref{cartesian}) of the $R/\mm^n$-module $T/\mm^nT$.
\end{prop}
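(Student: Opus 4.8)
The strategy is to deduce Proposition~\ref{main-cart} from Lemma~\ref{Lemma371} by a comparison argument: condition (i) already hands us, via the Lemma, that the \emph{propagated} $\FF$ is cartesian on $\textup{Quot}_{R/\mm^n}(T/\mm^n T)$, so it suffices to transfer cartesianness from $\FF$ to $\GG$ using the freeness hypothesis (ii) on the quotient local condition. First I would unwind Definition~\ref{cartesian} in this setting: fixing the injection $\phi_{i,j}\colon T/\mm^i T \hookrightarrow T/\mm^j T$ (multiplication by $\pi^{j-i}$, for $i\le j\le n$), cartesianness of $\GG$ means exactly that $H^1_\GG(\QQ_\ell, T/\mm^i T)$ equals the preimage under the induced map on $H^1$ of $H^1_\GG(\QQ_\ell, T/\mm^j T)$. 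So everything reduces to a statement about how the submodules $H^1_\FF$ and $H^1_\GG$ of $H^1(\QQ_\ell, T/\mm^j T)$ pull back along these transition maps.

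The key step is the following linear-algebra observation, applied levelwise. For each $j$ we have inclusions $H^1_\GG(\QQ_\ell, T/\mm^j T) \subseteq H^1_\FF(\QQ_\ell, T/\mm^j T) \subseteq H^1(\QQ_\ell, T/\mm^j T)$, and by (ii) the middle-over-inner quotient $Q_j := H^1_\FF(\QQ_\ell, T/\mm^j T)/H^1_\GG(\QQ_\ell, T/\mm^j T)$ is free over $R/\mm^j$. (Note that the hypothesis is phrased at level $n$, but since the local conditions at level $j$ are themselves propagated from level $n$ via $T/\mm^j T = (T/\mm^n T)/\mm^j(T/\mm^n T)$, and quotients of free modules over the relevant quotient ring stay free, one gets freeness at every intermediate level; I would spell this compatibility out.) Now take $c \in H^1(\QQ_\ell, T/\mm^i T)$ whose image $\phi_*(c)$ in $H^1(\QQ_\ell, T/\mm^j T)$ lies in $H^1_\GG$ at level $j$. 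By cartesianness of $\FF$ (from Lemma~\ref{Lemma371}), $c$ already lies in $H^1_\FF(\QQ_\ell, T/\mm^i T)$. It remains to see $c$ lies in $H^1_\GG$ at level $i$; equivalently, that its class $\bar c$ in $Q_i$ vanishes. The map $\phi_*$ carries $Q_i$ into $Q_j$, and on free modules over $R/\mm^i$ and $R/\mm^j$ the transition map $Q_i \to Q_j$ (induced by multiplication by $\pi^{j-i}$ after the identification $R/\mm^i = \pi^{j-i}(R/\mm^j)$, roughly) is \emph{injective} on $Q_i$ — this is where freeness is essential, since multiplication by $\pi^{j-i}$ from a free $R/\mm^i$-module into a free $R/\mm^j$-module has no kernel. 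Since $\phi_*(\bar c) = 0$ in $Q_j$, we get $\bar c = 0$, i.e. $c \in H^1_\GG(\QQ_\ell, T/\mm^i T)$, as desired. Conversely the inclusion of $H^1_\GG(\QQ_\ell, T/\mm^i T)$ inside the preimage is automatic from functoriality, so the two coincide.

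The main obstacle I anticipate is bookkeeping rather than conceptual: making the identification of the transition map $Q_i \to Q_j$ precise enough to invoke injectivity. One has to be careful that $H^1_\FF(\QQ_\ell, T/\mm^i T)$ is genuinely the preimage of $H^1_\FF(\QQ_\ell, T/\mm^j T)$ (which is the content of Lemma~\ref{Lemma371} applied to $\FF$, granted by (i)), so that the comparison of $\FF$ and $\GG$ can be done entirely inside the filtered quotients $Q_\bullet$; and one must check that the short exact sequences $0 \to H^1_\GG \to H^1_\FF \to Q_\bullet \to 0$ are compatible with the transition maps in a way that identifies the induced map on $Q$'s with the expected scalar multiplication. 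Once the diagram is set up correctly, the injectivity of $\pi^{j-i}$ on free modules closes the argument. I would also remark that hypothesis (i) is used \emph{only} through Lemma~\ref{Lemma371} to get cartesianness of $\FF$, and hypothesis (ii) is used \emph{only} for the freeness of $Q_\bullet$; this clean separation is worth stating explicitly since later applications verify the two hypotheses by quite different means.
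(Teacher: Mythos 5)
Your overall skeleton is the same as the paper's: use Lemma~\ref{Lemma371} (via hypothesis (i)) to reduce the cartesian condition for $\GG$ to a statement inside $H^1_\FF$, then deduce it from injectivity of the induced maps on the quotients $Q_j=H^1_\FF(\QQ_\ell,T/\mm^jT)/H^1_\GG(\QQ_\ell,T/\mm^jT)$, with freeness supplying the injectivity. But the two steps you defer as ``bookkeeping'' are exactly where the real content lies, and as written they contain a genuine gap. First, your justification that $Q_j$ is free over $R/\mm^j$ for $j<n$ --- ``quotients of free modules over the relevant quotient ring stay free'' --- is false as a general principle (e.g.\ $R/\mm^i$ is a quotient of the free module $R/\mm^j$ but is not free over $R/\mm^j$). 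What one actually needs is the identification $Q_j\cong Q/\mm^jQ$ for a single fixed module $Q$, and this is not formal: the kernel of $H^1(\QQ_\ell,T/\mm^nT)\to H^1(\QQ_\ell,T/\mm^jT)$ is the image of $H^1(\QQ_\ell,\mm^jT/\mm^nT)$, which is in general strictly larger than $\mm^jH^1(\QQ_\ell,T/\mm^nT)$, so the image of a local condition at level $j$ is not visibly the mod-$\mm^j$ reduction of the one at level $n$. Second, knowing merely that $Q_i$ and $Q_j$ are free over $R/\mm^i$ and $R/\mm^j$ does not make the map $Q_i\to Q_j$ induced by $[\pi^{j-i}]$ injective; injectivity holds only after you have identified this map with the canonical map $Q/\mm^iQ\stackrel{\pi^{j-i}}{\lra}Q/\mm^jQ$ for one and the same $Q$. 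You acknowledge this identification is needed but do not supply it, and your closing remark that hypothesis (i) enters only through Lemma~\ref{Lemma371} is precisely where the plan goes wrong: (i) is needed a second time to make the identification.

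The way the paper closes both gaps is to define $Q$ integrally, $0\to H^1_\GG(\QQ_\ell,T)\to H^1_\FF(\QQ_\ell,T)\to Q\to 0$, and to use the torsion-freeness in (i) twice: it gives that the kernel of $H^1_\FF(\QQ_\ell,T)\to H^1(\QQ_\ell,T)/\mm^jH^1(\QQ_\ell,T)$ is $\mm^jH^1_\FF(\QQ_\ell,T)$, and that the corresponding kernel for $\GG$ is $H^1_\GG(\QQ_\ell,T)\cap\mm^jH^1_\FF(\QQ_\ell,T)$. From this one gets, for every $j\leq n$, an exact sequence $0\to H^1_\GG(\QQ_\ell,T/\mm^jT)\to H^1_\FF(\QQ_\ell,T/\mm^jT)\to Q/\mm^jQ\to 0$ compatibly with the maps $[\pi^{j-i}]$, and the right-hand vertical map in the resulting commutative diagram is literally $\pi^{j-i}:Q/\mm^iQ\to Q/\mm^jQ$. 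Hypothesis (ii) says $Q/\mm^nQ$ is free over $R/\mm^n$, so this map is injective for $0<i\leq j\leq n$, which is your desired injectivity of $Q_i\to Q_j$ and finishes the proof. So your plan is salvageable, but only by carrying out this integral computation --- i.e.\ by using hypothesis (i) beyond Lemma~\ref{Lemma371} --- rather than by the formal transitivity-of-propagation argument you propose.
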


\begin{proof}
Let the $R$-module $Q$ be defined by the exactness of the following sequence:
\begin{equation}
\label{obv-exact}
0\lra H^1_\GG(\QQ_\ell,T) \lra H^1_\FF(\QQ_\ell,T) \lra Q \lra 0 .
\end{equation}
The propagated local condition $H^1_\FF(\QQ_\ell,T/\mm^jT)$ is defined as the image of $H^1_\FF(\QQ_\ell,T)$ under the canonical homomorphism $$H^1(\QQ_\ell,T)/\mm^jH^1(\QQ_\ell,T)\hookrightarrow H^1(\QQ_\ell,T/\mm^j)$$ (which is induced from the long exact sequence for the $G_{\QQ_{\ell}}$-cohomology of the exact sequence $$0\lra T \stackrel{\pi^j}{\lra} T \lra T/\mm^j \lra 0$$ where we recall that $\pi$ is a uniformizer of $R$). In other words 
\begin{equation}
\label{eqn:propagate}
H^1_\FF(\QQ_\ell,T/\mm^jT)=\textup{im}\left\{H^1_\FF(\QQ_\ell,T) \ra \frac{H^1(\QQ_\ell,T)}{\mm^jH^1(\QQ_\ell,T)} \hookrightarrow H^1(\QQ_\ell,T/\mm^jT) \right\} 
\end{equation} 
The kernel of the first map in (\ref{eqn:propagate}) is $H^1_\FF(\QQ_\ell,T) \cap \pi^j H^1(\QQ_\ell,T)$ which is equal to $\pi^jH^1_\FF(\QQ_\ell,T)$ since we assumed that $H^1(\QQ_\ell,T)/H^1_\FF(\QQ_\ell,T)$ is $R$-torsion-free. Thus 
\begin{align*}
H^1_\FF(\QQ_\ell&,T/\mm^jT)\\&=\textup{im}\left\{\frac{H^1_\FF(\QQ_\ell,T)}{\mm^jH^1_\FF(\QQ_\ell,T)} \hookrightarrow \frac{H^1(\QQ_\ell,T)}{\mm^jH^1(\QQ_\ell,T)} \hookrightarrow H^1(\QQ_\ell,T/\mm^jT) \right\} 
\end{align*}

Similarly, 
\begin{equation}
\label{eqn:propagateG}
H^1_\GG(\QQ_\ell,T/\mm^jT)=\textup{im}\left\{H^1_\GG(\QQ_\ell,T) \ra \frac{H^1(\QQ_\ell,T)}{\mm^jH^1(\QQ_\ell,T)} \hookrightarrow H^1(\QQ_\ell,T/\mm^jT) \right\} 
\end{equation}
 and the kernel of the first map in (\ref{eqn:propagateG}) is $H^1_\GG(\QQ_\ell,T) \cap \pi^j H^1(\QQ_\ell,T)$ which equals $H^1_\GG(\QQ_\ell,T) \cap \pi^j H^1_\FF(\QQ_\ell,T)$ because $H^1(\QQ_\ell,T)/H^1_\FF(\QQ_\ell,T)$ is $R$-torsion-free. We thus have 
\begin{align*}
&H^1_\GG(\QQ_\ell,T/\mm^jT)=\\&\textup{im}\left\{\frac{H^1_\GG(\QQ_\ell,T)}{\mm^jH^1_\FF(\QQ_\ell,T) \cap H^1_\GG(\QQ_\ell,T)} \hookrightarrow \frac{H^1(\QQ_\ell,T)}{\mm^jH^1(\QQ_\ell,T)} \hookrightarrow H^1(\QQ_\ell,T/\mm^jT) \right\} 
\end{align*}

Using the description which we give above of the propagated local conditions $H^1_\FF(\QQ_\ell,T/\mm^jT)$ and $H^1_\GG(\QQ_\ell,T/\mm^jT)$ we obtain an exact sequence
\begin{equation}
\label{key-exact}
0\lra H^1_\GG(\QQ_\ell,T/\mm^jT) \lra H^1_\FF(\QQ_\ell,T/\mm^jT) \lra Q/\mm^jQ.
\end{equation}
We note that the exact sequence (\ref{key-exact}) is essentially obtained by tensoring the exact sequence (\ref{obv-exact}) by $R/\mm^j$ and completing the tensored sequence to an exact sequence on the left.

To prove the statement of the Proposition we need to prove that \begin{equation}
\label{def-eqn:cartesian} H^1_\GG(\QQ_\ell,T/\mm^iT)=\ker\left\{ H^1(\QQ_\ell,T/\mm^iT) \stackrel{[\pi^{j-i}]}{\lra} \frac{H^1(\QQ_\ell,T/\mm^jT)}{H^1_\GG(\QQ_\ell,T/\mm^jT)} \right\}\end{equation} for $0<i\leq j \leq n$,  where $[\pi^{j-i}]$ is the map induced on the cohomology groups from the map $T/\mm^iT \stackrel{\pi^{j-i}}{\lra}T/\mm^jT$.  Now if $c \in H^1(\QQ_\ell,T/\mm^iT)$ and $[\pi^{j-i}]c \in H^1_\GG(\QQ_\ell,T/\mm^jT) \subset H^1_\FF(\QQ_\ell,T/\mm^jT)$ it follows from Lemma~\ref{Lemma371} that $c \in H^1_\FF(\QQ_\ell,T/\mm^jT)$. Thus (\ref{def-eqn:cartesian}) is equivalent to the statement 
\begin{equation} \label{eqn:cartesian} H^1_\GG(\QQ_\ell,T/\mm^iT)=\ker\left\{ H^1_\FF(\QQ_\ell,T/\mm^iT) \stackrel{[\pi^{j-i}]}{\lra} \frac{H^1_\FF(\QQ_\ell,T/\mm^jT)}{H^1_\GG(\QQ_\ell,T/\mm^jT)} \right\}
\end{equation}  for $0<i\leq j \leq n$.

To see (\ref{eqn:cartesian}) holds consider the following commutative diagram (where the rows come from the exact sequence (\ref{key-exact})): 
$$
\xymatrix{0 \ar[r]& H^1_\GG(\QQ_\ell,T/\mm^iT) \ar[r] \ar[d]^{[\pi^{j-i}]}& H^1_\FF(\QQ_\ell,T/\mm^iT) \ar[r] \ar[d]^{[\pi^{j-i}]}&  Q/\mm^iQ \ar[r]\ar[d]^{\pi^{j-i}} & 0\\
0 \ar[r] & H^1_\GG(\QQ_\ell,T/\mm^jT) \ar[r] & H^1_\FF(\QQ_\ell,T/\mm^jT) \ar[r]&  Q/\mm^jQ \ar[r]& 0
}
$$

By our assumption that $H^1_\FF(\QQ_\ell,T/\mm^nT)/H^1_\GG(\QQ_\ell,T/\mm^nT) \stackrel{\sim}\ra Q/\mm^nQ$ is a free $R/\mm^n$-module it follows that the right vertical map in the diagram above is injective for $0 < i \leq j \leq n$. This shows that the map $$H^1_\FF(\QQ_\ell,T/\mm^iT)\Big{/}H^1_\GG(\QQ_\ell,T/\mm^iT) \stackrel{[\pi^{j-i}]}{\lra}H^1_\FF(\QQ_\ell,T/\mm^jT)\Big{/}H^1_\GG(\QQ_\ell,T/\mm^jT)$$ is injective for $0<i\leq j\leq n$, which proves (\ref{eqn:cartesian}) and the Proposition.
\end{proof}

\begin{cor}
\label{KS-vanishing}
Suppose $\FF$ is a Selmer structure on $T$ and hypotheses \textup{\textbf{H1}-\textbf{H6}} are satisfied for $(T,\FF,\PP)$. Suppose further that the core Selmer rank $\XX(T,\FF)$ is one. Let $\GG$ be another Selmer structure on $T$, and suppose the local condition at $\ell$ determined $\GG$ satisfies the assumptions of Theorem~\ref{main-cart} and that $H^1_\GG(\QQ_\ell,T) \subsetneq H^1_\FF(\QQ_\ell,T)$. Then $$\textup{\textbf{KS}}(T/\mm^nT,\GG,\PP_n)=0.$$  
\end{cor}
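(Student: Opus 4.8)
The plan is to obtain the vanishing as an instance of Theorem~\ref{thmpa}(i) over the Artinian quotient ring $R/\mm^n$, so that everything comes down to checking that the core Selmer rank $\XX(T/\mm^nT,\GG)$ equals $0$ and that the Selmer triple $(T/\mm^nT,\GG,\PP_n)$ satisfies \textbf{H1}--\textbf{H6}. For the hypotheses: \textbf{H1}, \textbf{H2} and \textbf{H5} involve only the residual representation $T/\mm T$ (and the set $\PP$), while \textbf{H3} and \textbf{H4} pass from $T$ to each quotient $T/\mm^jT$ by the standard arguments of \cite{mr02} \S3.5 (the field $\QQ(T/\mm^nT,\mu_{p^\infty})$ is contained in $\QQ(T,\mu_{p^\infty})$, with pro-$p$ intervening Galois group), so \textbf{H1}--\textbf{H5} are inherited; and \textbf{H6}, i.e. that $\GG$ is cartesian on $\textup{Quot}_{R/\mm^n}(T/\mm^nT)$, is precisely the conclusion of Proposition~\ref{main-cart}, whose hypotheses we are assuming. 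Once $\XX(T/\mm^nT,\GG)=0$ is known, Theorem~\ref{thmpa}(i) yields $\textup{\textbf{KS}}(T/\mm^nT,\GG,\PP_n)=0$.

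The substance is therefore the computation of the core rank, which I would carry out after reducing to the residue field. Since both $\FF$ (by \textbf{H6}) and $\GG$ (by Proposition~\ref{main-cart}) are cartesian on the relevant quotient categories, the core Selmer rank is stable under reduction modulo powers of $\mm$ (\cite{mr02} \S5.2); hence $\XX(T/\mm^nT,\GG)=\XX(T/\mm T,\GG)$, and by hypothesis $\XX(T/\mm T,\FF)=\XX(T,\FF)=1$. Moreover \textbf{H1} guarantees $H^0(\QQ,T/\mm T)=H^0(\QQ,T^*[\mm])=0$, and in that case the core-rank formula of \cite{mr02} \S4.1 identifies the core rank of any cartesian Selmer structure $\mathcal H$ on $T/\mm T$ with
\[
r(\mathcal H):=\dim_{\mathbf k}H^1_{\mathcal H}(\QQ,T/\mm T)-\dim_{\mathbf k}H^1_{\mathcal H^*}(\QQ,T^*[\mm]).
\]

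Now I would bring in global duality. By the computation performed in the proof of Proposition~\ref{main-cart}, the exact sequence~(\ref{key-exact}) with $j=1$ gives $H^1_\GG(\QQ_\ell,T/\mm T)\subseteq H^1_\FF(\QQ_\ell,T/\mm T)$ with cokernel canonically isomorphic to $Q/\mm Q$, where $Q:=H^1_\FF(\QQ_\ell,T)/H^1_\GG(\QQ_\ell,T)$ is nonzero (the containment over $T$ being strict by hypothesis) and $Q/\mm^nQ$, hence $Q/\mm Q$, is free over $R/\mm^n$, respectively $\mathbf k$, by assumption~(ii) of Proposition~\ref{main-cart}; so $\dim_{\mathbf k}\bigl(H^1_\FF(\QQ_\ell,T/\mm T)/H^1_\GG(\QQ_\ell,T/\mm T)\bigr)=a$ for some integer $a\geq 1$. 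Since $\GG$ and $\FF$ agree at every prime $\neq\ell$, the Greenberg--Wiles global duality formula (see \cite{mr02} Theorem 2.3.4), in which all local terms away from $\ell$ cancel, gives $r(\FF)-r(\GG)=a$. Putting the pieces together,
\[
\XX(T/\mm^nT,\GG)=\XX(T/\mm T,\GG)=r(\GG)=r(\FF)-a=\XX(T/\mm T,\FF)-a=1-a\leq 0;
\]
as the core Selmer rank is a non-negative integer this forces $\XX(T/\mm^nT,\GG)=0$, and Theorem~\ref{thmpa}(i) completes the proof.

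The main obstacle I anticipate is the interface with the Mazur--Rubin machinery in passing among $T$, $T/\mm^nT$ and $T/\mm T$: one must verify that \textbf{H1}--\textbf{H6} genuinely descend to $(T/\mm^nT,\GG,\PP_n)$ --- the delicate cases being \textbf{H3}/\textbf{H4}, which refer to $\QQ(T,\mu_{p^\infty})$ rather than $\QQ(T/\mm^nT,\mu_{p^\infty})$ --- and that the reduction-stability of the core rank and its identification with $r(\mathcal H)$ are applied only within the ranges in which \cite{mr02} establishes them, which is exactly where the cartesianness of $\GG$ furnished by Proposition~\ref{main-cart} is indispensable. A secondary point that ought to be stated explicitly is the standing assumption that $\GG$ is obtained from $\FF$ by shrinking the local condition only at the prime $\ell$ (so $\GG\leq\FF$ everywhere): this is what makes the local contributions in the Greenberg--Wiles formula cancel outside $\ell$ and what forces the core rank to genuinely \emph{drop} by $a\geq 1$ rather than merely change.
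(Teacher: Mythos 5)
Your proposal is correct and follows essentially the same route as the paper: establish \textbf{H1}--\textbf{H6} for $(T/\mm^nT,\GG,\PP_n)$ (with \textbf{H6} supplied by Proposition~\ref{main-cart}), use global duality to see that the strict containment $H^1_\GG(\QQ_\ell,T)\subsetneq H^1_\FF(\QQ_\ell,T)$ forces $0\leq\XX(T/\mm^nT,\GG)<\XX(T/\mm^nT,\FF)=1$, and conclude by Theorem~\ref{thmpa}(i). The only difference is one of exposition: you write out the Greenberg--Wiles computation and the Nakayama argument that the local cokernel is nonzero mod $\mm$, whereas the paper delegates these steps to citations of Wiles' formula and \cite{mr02} Proposition 4.1.4.
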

\begin{proof}
It follows from Proposition~\ref{main-cart} that the hypotheses \textup{\textbf{H1}-\textbf{H6}} are satisfied by $(T/\mm^nT,\GG,\PP_n)$. Further, since $H^1_\GG(\QQ_\ell,T) \subsetneq H^1_\FF(\QQ_\ell,T)$, it follows from \cite{wi}~Proposition 1.6, \cite{mr02}~Definition 4.1.11 of the core Selmer rank and~\cite{mr02} Proposition 4.1.4 that $$0\leq \XX(T/\mm^nT,\GG) < \XX(T/\mm^nT,\FF)=1,$$ hence $\XX(T/\mm^nT,\GG)=0$. Now Theorem~\ref{thmpa} shows that $$\textup{\textbf{KS}}(T/\mm^nT,\GG,\PP_n)=0,$$ as desired.  
\end{proof}

\section{Applications}

Until the end of this paper we assume that $R$ is the ring of integers of a finite extension of $\QQ_p$ and let $F$ be its field of fractions. Let $\FFc$ be the canonical Selmer structure as in Definition~\ref{canonical selmer}. Suppose $\FF_{\textup{u-}\ell}$ is the Selmer structure defined as follows:
\begin{itemize}
\item $\Sigma(\FF_{\textup{u-}\ell})=\Sigma(\FFc),$
\item if $q \in \Sigma(\FF_{\textup{u-}\ell})$ and $q \neq \ell$ then $H^1_{\FF_{\textup{u-}\ell}}(\QQ_{q},T)= H^1_{\FFc}(\QQ_{q},T)$, 
\item $H^1_{\FF_{\textup{u-}\ell}}(\QQ_{\ell},T)=H^1_{\textup{unr}}(\QQ_{\ell},T),$
\end{itemize}
where $H^1_{\textup{unr}}(\QQ_{\ell},T)=\ker\{H^1(\QQ_{\ell},T)\ra H^1(\QQ_{\ell}^{\textup{unr}},T)\}$ is the unramified cohomology.

\begin{rem}
\label{rem:ur-finite comparison}
By~\cite{r00} Lemma I.3.5 $H^1_{\FF_{\textup{u-}\ell}}(\QQ_{\ell},T)\subset H^1_{\FFc}(\QQ_{\ell},T)$ and $$H^1_{\FFc}(\QQ_{\ell},T)/H^1_{\FF_{\textup{u-}\ell}}(\QQ_{\ell},T) \stackrel{\sim}{\lra}H^0(\QQ_{\ell},W^{I_{\ell}}\Big{/}V^{I_{\ell}}/T^{I_{\ell}}),$$where $I_{\ell}\subset G_{\QQ_{\ell}}$ is the inertia subgroup, $V=T\otimes_R F$ and $W=V/T$. Note that the $R$-module $H^0(\QQ_{\ell},W^{I_{\ell}}\Big{/}V^{I_{\ell}}/T^{I_{\ell}})$ is finite and its order is the $p$-part of the Tamagawa number at $\ell$.
\end{rem}
Recall that there is a canonical map (which we call the Euler system to Kolyvagin system map) $$\textup{\textbf{ES}}(T) \lra \overline{\textup{\textbf{KS}}}(T,\FFc,\PP)$$ from the module of Euler systems to the \emph{generalized} module of Kolyvagin systems (see~\cite{mr02} Definition 3.1.6).

\begin{thm}
\label{ESKS cokernel}
Let $\FFc$ and $\FF_{\textup{u-}\ell}$ be as above. Suppose $(T,\PP)$ satisfies the hypotheses \textup{\textbf{H1-H5}}, $\XX(T,\FFc)=1$ and $n\in\ZZ_{\geq0}$ is such that $\left(H^1_{\FFc}(\QQ_\ell,T)/H^1_{\FF_{\textup{u-}\ell}}(\QQ_{\ell},T)\right) \otimes R/\mm^n$ is a free $R/\mm^n$-module of positive rank. Then $$\textup{im}\left(\textup{\textbf{ES}}(T) \ra \textup{\textbf{KS}}(T,\FFc,\PP)\right) \subset \mm^n \textup{\textbf{KS}}(T,\FFc,\PP).$$ 
\end{thm}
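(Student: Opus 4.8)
The plan is to reduce everything modulo $\mm^n$ and to play the canonical Selmer structure $\FFc$ off against the smaller structure $\FF_{\textup{u-}\ell}$, using the vanishing of the module of Kolyvagin systems attached to $\FF_{\textup{u-}\ell}$.

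\emph{First reduction.} Since $\XX(T,\FFc)=1$, Theorem~\ref{thmdvr} gives $\overline{\textup{\textbf{KS}}}(T,\FFc,\PP)=\textup{\textbf{KS}}(T,\FFc,\PP)$, free of rank one over $R$, while Theorem~\ref{thmpa}(iii) (applied to the artinian quotient $T/\mm^nT$, whose core Selmer rank is again $1$, together with the surjectivity of the transition maps) produces a surjection $\textup{\textbf{KS}}(T,\FFc,\PP)\twoheadrightarrow\textup{\textbf{KS}}(T/\mm^nT,\FFc,\PP_n)$ onto a free $R/\mm^n$-module of rank one. Hence an element $\kappa\in\textup{\textbf{KS}}(T,\FFc,\PP)$ lies in $\mm^n\textup{\textbf{KS}}(T,\FFc,\PP)$ if and only if its image in $\textup{\textbf{KS}}(T/\mm^nT,\FFc,\PP_n)$ vanishes. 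So it is enough to prove that the composite
$$\textup{\textbf{ES}}(T)\lra\overline{\textup{\textbf{KS}}}(T,\FFc,\PP)=\textup{\textbf{KS}}(T,\FFc,\PP)\lra\textup{\textbf{KS}}(T/\mm^nT,\FFc,\PP_n)$$
is identically zero.

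\emph{Vanishing for the smaller structure.} I would apply Corollary~\ref{KS-vanishing} with $\FF=\FFc$ and $\GG=\FF_{\textup{u-}\ell}$. Its hypotheses are met: $\XX(T,\FFc)=1$, and \textbf{H1}--\textbf{H6} hold for $(T,\FFc,\PP)$ (\textbf{H1}--\textbf{H5} by assumption, \textbf{H6} because $\FFc$ is cartesian, which rests on the torsion-freeness of $H^1(\QQ_\ell,T)/H^1_{\FFc}(\QQ_\ell,T)$). That torsion-freeness is hypothesis (i) of Proposition~\ref{main-cart} for $\GG=\FF_{\textup{u-}\ell}$; for hypothesis (ii), the computation of propagated local conditions carried out in the proof of Proposition~\ref{main-cart} (again using that the $\FFc$-quotient is torsion-free) identifies $H^1_{\FFc}(\QQ_\ell,T/\mm^nT)/H^1_{\FF_{\textup{u-}\ell}}(\QQ_\ell,T/\mm^nT)$ with $\bigl(H^1_{\FFc}(\QQ_\ell,T)/H^1_{\FF_{\textup{u-}\ell}}(\QQ_\ell,T)\bigr)\otimes R/\mm^n$, which is free of positive rank over $R/\mm^n$ by hypothesis; the same hypothesis forces the inclusion $H^1_{\FF_{\textup{u-}\ell}}(\QQ_\ell,T)\subsetneq H^1_{\FFc}(\QQ_\ell,T)$ to be strict. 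Corollary~\ref{KS-vanishing} therefore yields $\textup{\textbf{KS}}(T/\mm^nT,\FF_{\textup{u-}\ell},\PP_n)=0$.

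\emph{Factoring the Euler system map through $\FF_{\textup{u-}\ell}$.} Because $H^1_{\FF_{\textup{u-}\ell}}(\QQ_\ell,T)\subset H^1_{\FFc}(\QQ_\ell,T)$ (Remark~\ref{rem:ur-finite comparison}) and the two Selmer structures coincide away from $\ell$, there is a tautological injection $\textup{\textbf{KS}}(T/\mm^nT,\FF_{\textup{u-}\ell},\PP_n)\hookrightarrow\textup{\textbf{KS}}(T/\mm^nT,\FFc,\PP_n)$. The key claim — and the crux of the proof — is that the composite displayed in the first reduction factors through this injection; granting it, the composite lands in $\textup{\textbf{KS}}(T/\mm^nT,\FF_{\textup{u-}\ell},\PP_n)=0$ and the first reduction finishes the argument. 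Concretely, for $\mathbf{c}\in\textup{\textbf{ES}}(T)$ with associated Kolyvagin system $\kappa$, one must check that every component of the reduction of $\kappa$ modulo $\mm^n$ has its restriction to $\QQ_\ell$ lying in the propagated local condition coming from $H^1_{\FF_{\textup{u-}\ell}}(\QQ_\ell,T)$. Kolyvagin's derivative classes are produced from the Euler system classes $c_F$ ($F\subset\kk$) by applying derivative operators and corestriction; since $\ell\neq p$ and $\ell\notin\PP$, each such $F$ is unramified at $\ell$, as is the cyclotomic tower $F(\mu_{p^\infty})/F$ along which the $c_F$ are norm-compatible, so the $\ell$-localizations inflate from the unramified quotient and the resulting torsion-coefficient derivative classes satisfy an unramified local condition at $\ell$. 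The assumption that $\bigl(H^1_{\FFc}(\QQ_\ell,T)/H^1_{\FF_{\textup{u-}\ell}}(\QQ_\ell,T)\bigr)\otimes R/\mm^n$ is free of positive rank over $R/\mm^n$ is then exactly what is needed to pin this unramified condition, after reduction modulo $\mm^n$, inside the propagated $\FF_{\textup{u-}\ell}$-condition. I expect this last step — matching the $\ell$-local behavior of the Kolyvagin derivative classes with the propagated (rather than the naive) unramified condition modulo $\mm^n$ — to be the main obstacle, and it is also the point at which only the single Tamagawa factor of $\ell$, and not a product over all bad primes, can be extracted.
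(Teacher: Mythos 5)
Your proposal follows essentially the same route as the paper's proof: reduce to showing that the composite $\textup{\textbf{ES}}(T)\ra\textup{\textbf{KS}}(T/\mm^nT,\FFc,\PP_n)$ vanishes, factor it through $\textup{\textbf{KS}}(T/\mm^nT,\FF_{\textup{u-}\ell},\PP_n)$, and kill the latter by Corollary~\ref{KS-vanishing} applied with $\FF=\FFc$, $\GG=\FF_{\textup{u-}\ell}$. The factoring step you single out as the crux is precisely the point the paper does not reprove but imports from~\cite{mr02} (Remark A.5 and the proof of Theorem 3.2.4), and your sketch of why the derivative classes satisfy the unramified condition at $\ell$ is consistent with that argument.
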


\begin{proof}
We begin with the remark that $\textup{\textbf{KS}}(T,\FFc,\PP)$ is canonically isomorphic to $\overline{\textup{\textbf{KS}}}(T,\FFc,\PP)$ when the core Selmer rank $\XX(T,\FFc)$ is one. We thus allow ourselves to view the map from the module of Euler systems to the generalized module of Kolyvagin systems as a map  $\textup{\textbf{ES}}(T) \lra \textup{\textbf{KS}}(T,\FFc,\PP)$ (and this is how the statement of the Theorem makes sense). Under the assumptions above, $\textup{\textbf{KS}}(T,\FFc,\PP)$ is an $R$-module of rank one. Consider the map \begin{equation}
\label{eqn:es-ks}
\textup{\textbf{ES}}(T) \ra \textup{\textbf{KS}}(T,\FFc,\PP) \ra \textup{\textbf{KS}}(T/\mm^nT,\FFc,\PP_n).
\end{equation} 
Since $\textup{\textbf{KS}}(T,\FFc,\PP)$ is free of rank one, the statement of the Theorem is equivalent to the statement that the map~(\ref{eqn:es-ks}) is zero.
As pointed out in~\cite{mr02} Remark A.5, the proof of \cite{mr02}~Theorem 3.2.4 shows that the map (\ref{eqn:es-ks}) factors as follows:
$$
\xymatrix{
\textup{\textbf{ES}}(T) \ar[r]\ar[rd]& \textup{\textbf{KS}}(T,\FFc,\PP) \ar[r] &  \textup{\textbf{KS}}(T/\mm^nT,\FFc,\PP_n)\\
&\textup{\textbf{KS}}(T/\mm^nT,\FF_{\textup{u-}\ell},\PP_n) \ar[ur]&
}
$$
Thus it suffices to prove that $\textup{\textbf{KS}}(T/\mm^nT,\FF_{\textup{u-}\ell},\PP_n)=0$. This follows immediately from Corollary~\ref{KS-vanishing} applied with $\FF=\FFc$ and $\GG=\FF_{\textup{u-}\ell}$. Note that our assumptions guarantee that Corollary~\ref{KS-vanishing} applies with the choices above.
\end{proof}

Let $\QQ_\infty$ be the (cyclotomic) $\ZZ_p$-extension of $\QQ$, $\Gamma=\textup{Gal}(\QQ_\infty/\QQ)$ be its Galois group and $\Lambda=\ZZ_p[[\Gamma]]$ be the cyclotomic Iwasawa algebra. Let $\textup{\textbf{KS}}(T\otimes\Lambda,\FFc)$ be the module of $\Lambda$-adic Kolyvagin systems for $T$ (defined as in~\cite{kbbonline} \S3.2). Under certain hypotheses (see~\cite{kbbonline} \S2.2) it is proved that the $\Lambda$-module  $\textup{\textbf{KS}}(T\otimes\Lambda,\FFc)$ is free of rank one and that the specialization map $$\textup{\textbf{KS}}(T\otimes\Lambda,\FFc) \lra \textup{\textbf{KS}}(T,\FFc,\PP)$$ is surjective. We remark that the hypotheses \textbf{H.T} of~\cite{kbbonline} \S2.2 holds if $p$ does not divide any of the Tamagawa numbers at any prime $\ell\neq p$. 

If, however, $p$ does divide at least one Tamagawa number then the specialization map above is not surjective and it is predicted in~\cite{kbbonline} Remark 3.25 that the cokernel of this map should be related to Tamagawa numbers. As a justification of this remark one may prove:
\begin{thm}
\label{KSLambda to KS}
Suppose all the assumptions of the Theorem~\ref{ESKS cokernel} hold for $(T,\FFc,\PP)$ and $\FF_{\textup{u-}\ell}$. Let $n\in\ZZ^+$ be as in Theorem~\ref{ESKS cokernel}. Then $$\textup{im}\left( \textup{\textbf{KS}}(T\otimes\Lambda,\FFc) \lra \textup{\textbf{KS}}(T,\FFc,\PP) \right) \subset p^n \textup{\textbf{KS}}(T,\FFc,\PP).$$
\end{thm}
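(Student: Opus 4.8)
The plan is to run the proof of Theorem~\ref{ESKS cokernel} essentially word for word, with the specialization map $\textup{\textbf{KS}}(T\otimes\Lambda,\FFc)\to\textup{\textbf{KS}}(T,\FFc,\PP)$ in the role of the Euler-system-to-Kolyvagin-system map. Since $\XX(T,\FFc)=1$, Theorem~\ref{thmdvr}(ii) gives that $\textup{\textbf{KS}}(T,\FFc,\PP)$ is free of rank one over $R$, while Theorems~\ref{thmpa}(iii) and~\ref{thmdvr}(ii) show that the reduction map $\textup{\textbf{KS}}(T,\FFc,\PP)\to\textup{\textbf{KS}}(T/\mm^nT,\FFc,\PP_n)$ is surjective onto a free $R/\mm^n$-module of rank one; a length count then identifies its kernel with $\mm^n\textup{\textbf{KS}}(T,\FFc,\PP)$, which is the asserted $p^n\textup{\textbf{KS}}(T,\FFc,\PP)$ in the setting at hand (e.g.\ for $T=T_p(E)$, where $R=\ZZ_p$). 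It therefore suffices to prove that the composite
\[
\textup{\textbf{KS}}(T\otimes\Lambda,\FFc)\lra\textup{\textbf{KS}}(T,\FFc,\PP)\lra\textup{\textbf{KS}}(T/\mm^nT,\FFc,\PP_n)
\]
is the zero map.

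To this end I would show, exactly as in the proof of Theorem~\ref{ESKS cokernel} (where the analogous factorization of the Euler-system map is \cite{mr02} Remark~A.5), that this composite factors through $\textup{\textbf{KS}}(T/\mm^nT,\FF_{\textup{u-}\ell},\PP_n)$:
$$
\xymatrix{
\textup{\textbf{KS}}(T\otimes\Lambda,\FFc)\ar[r]\ar[rd]&\textup{\textbf{KS}}(T,\FFc,\PP)\ar[r]&\textup{\textbf{KS}}(T/\mm^nT,\FFc,\PP_n)\\
&\textup{\textbf{KS}}(T/\mm^nT,\FF_{\textup{u-}\ell},\PP_n)\ar[ur]&
}
$$
The reason is local. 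The cyclotomic $\ZZ_p$-extension $\QQ_\infty/\QQ$ is unramified at every $\ell\neq p$, so $I_\ell$ acts on $T\otimes\Lambda$ only through its action on $T$, and the $\Lambda$-adic canonical Selmer structure on $T\otimes\Lambda$ imposes at $\ell$ the \emph{unramified} local condition. Unwinding the definition of that Selmer structure and of the specialization map in \cite{kbbonline} \S3.2, one finds that the local class at $\ell$ of the bottom-level specialization of any $\Lambda$-adic Kolyvagin system lands in $H^1_{\textup{unr}}(\QQ_\ell,T)=H^1_{\FF_{\textup{u-}\ell}}(\QQ_\ell,T)$, i.e.\ it satisfies the \emph{stronger} condition $\FF_{\textup{u-}\ell}$ and not merely the finite condition $H^1_{\FFc}(\QQ_\ell,T)=H^1_{\textup{f}}(\QQ_\ell,T)$. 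This is exactly the phenomenon behind hypothesis \textbf{H.T} and \cite{kbbonline} Remark~3.25: if $p$ divides no Tamagawa number the two conditions coincide (and the specialization map is surjective), whereas in general the image is constrained by $\FF_{\textup{u-}\ell}$. Reducing this modulo $\mm^n$, and using that propagation of the unramified condition remains unramified, gives the displayed factorization. This local input --- in particular, identifying the $\ell$-component of the $\Lambda$-adic Selmer structure of \cite{kbbonline} --- is the one genuinely new ingredient, and the step I expect to be the main obstacle; everything else is a transcription of the proof of Theorem~\ref{ESKS cokernel}.

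Finally, $\textup{\textbf{KS}}(T/\mm^nT,\FF_{\textup{u-}\ell},\PP_n)=0$ by Corollary~\ref{KS-vanishing} applied with $\FF=\FFc$ and $\GG=\FF_{\textup{u-}\ell}$, whose hypotheses are checked exactly as in the proof of Theorem~\ref{ESKS cokernel}: \textbf{H1}--\textbf{H6} hold for $(T,\FFc,\PP)$ (\textbf{H6} being automatic for $\FFc$), $\XX(T,\FFc)=1$ by assumption, and the condition on $n$ imported from Theorem~\ref{ESKS cokernel} is precisely what is needed to apply Proposition~\ref{main-cart} to the pair $(\FFc,\FF_{\textup{u-}\ell})$ --- using Remark~\ref{rem:ur-finite comparison} and the torsion-freeness of $H^1(\QQ_\ell,T)/H^1_{\FFc}(\QQ_\ell,T)$ --- and it forces the strict inclusion $H^1_{\FF_{\textup{u-}\ell}}(\QQ_\ell,T)\subsetneq H^1_{\FFc}(\QQ_\ell,T)$ that Corollary~\ref{KS-vanishing} requires. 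Hence the composite above is zero, so the image of the specialization map lies in $\mm^n\textup{\textbf{KS}}(T,\FFc,\PP)$, which completes the proof.
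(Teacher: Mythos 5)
Your proposal is correct and takes essentially the same route as the paper: the paper's proof also consists of rerunning the argument of Theorem~\ref{ESKS cokernel} verbatim for the specialization map, replacing the appeal to \cite{mr02} Remark A.5 by a citation of \cite{colmez-reciprocity} Proposition II.1.1, which supplies precisely the local input you derive by hand (that for $\ell\neq p$ the bottom-level specialization of a $\Lambda$-adic class lands in $H^1_{\textup{unr}}(\QQ_\ell,T)$, because $\QQ_\infty/\QQ$ is unramified at $\ell$). The only difference is that you spell out the content of that citation rather than quoting it.
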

\begin{proof}
The proof of Theorem~\ref{ESKS cokernel} applies in an identical way, by \cite{colmez-reciprocity} Proposition II.1.1 (used instead of the proof of Theorem 3.2.4 of \cite{mr02}).
\end{proof}

We now exhibit a particular application of Theorem~~\ref{ESKS cokernel}: We apply it with Kato's Euler system for the Tate module of an elliptic curve. Let $E_{/\QQ}$ be an elliptic curve defined over $\QQ$ and let $T=T_p(E)$ be its $p$-adic Tate module. We will also assume that 
\be\label{p3} p>3,  \ee
\be\label{surj-serre} \hbox{the } p\hbox{-adic representation }G_\QQ \ra \textup{Aut}(E[p^\infty]) \hbox{ is surjective.}\ee

Suppose the Tamagawa number $c_\ell=|E(\QQ_\ell)/E_0(\QQ_{\ell})|$ at $\ell\neq p$ is divisible by $p$, and set $n=\textup{ord}_p(c_\ell)$. Since we assumed $p>3$, this shows (see~\cite{silverman} Corollary C.15.2.1) that $E$ has split multiplicative reduction at $\ell$ (thus $E_{/\QQ_\ell}$ is a Tate curve $\textup{Tate}_q$ with Tate parameter $q \in \ell\ZZ_\ell$) and that the component group of the special fiber of the N\'eron model $\mathcal{E}_{/ \textup{Spec}(\ZZ_\ell)}$ of $E_{/\QQ_\ell}$ is a cyclic group isomorphic to  $\ZZ/c_\ell\ZZ$. Thus we have an exact sequence
\begin{equation}
\label{tamagawaseq}
0\lra E_0(\QQ_\ell)\lra E(\QQ_\ell) \lra \ZZ/c_\ell\ZZ\lra 0.
\end{equation}
Further, one also has $\ZZ_\ell^{\times} \stackrel{\sim}{\lra} E_0(\QQ_\ell)$ under Tate uniformization (see for example \cite{silverman} Theorem~C.14.1). This shows that $X[p^{\infty}]$ is finite and $X[p^{k}]\cong X/p^kX$ for $X=E_0(\QQ_\ell)$ or $X=E(\QQ_\ell)$; and for $k\in\ZZ^+$. Here $X[p^{\infty}]$ stands for the $p$-power torsion inside the group $X$.

One can check that $H^1_{\textup{f}}(\QQ_\ell,V)=0$, hence the restriction map $$ H^1(\QQ_\ell,V) \lra H^1(\QQ_\ell^{\textup{unr}},V)$$ is injective, thus 
\begin{align*}  H^1_{\FFc}(\QQ_\ell,T) = H^1_{\textup{f}}(\QQ_\ell,T):=&
\ker\{H^1(\QQ_\ell,T) \ra H^1(\QQ_\ell^{\textup{unr}},V)\}\\ &=\ker\{H^1(\QQ_\ell,T) \ra H^1(\QQ_\ell,V)\},\end{align*}
which equals the image of $E(\QQ_\ell)[p^{\infty}]$ (by, for example, \cite{tate4} Proposition 2.4) inside $H^1(\QQ_\ell,T)$. See also \cite{r00} \S1.6.4. Similarly, one may show that $H^1_{\textup{unr}}(\QQ_\ell,T)$ is the image of $E_0(\QQ_\ell)[p^{\infty}]$ inside $H^1(\QQ_\ell,T)$. The diagram below summarizes our discussion in this paragraph:
$$\xymatrix{H^1_{\textup{f}}(\QQ_\ell,T) \ar @{=}[r] &\textup{im}\{E(\QQ_\ell)[p^\infty] \ar@{^{(}->}[r] & H^1(\QQ_\ell,T) \} \\
H^1_{\textup{unr}}(\QQ_\ell,T) \ar @{=}[r] \ar[u]^{\cup}&\textup{im}\{E_0(\QQ_\ell)[p^\infty] \ar@{^{(}->}[r] & H^1(\QQ_\ell,T) \}
}
$$
This, together with (\ref{tamagawaseq}), Example~\ref{core-ell} and Corollary~\ref{ESKS cokernel} (with $n=\textup{ord}_p(c_\ell)$) shows 
\begin{cor}
\label{elliptic cokernel}
Let $T=T_p(E), E, c_\ell, n$ be as above. Then

$$\textup{im}\left(\textup{\textbf{ES}}(T) \ra \textup{\textbf{KS}}(T,\FFc,\PP)\right) \subset p^n \textup{\textbf{KS}}(T,\FFc,\PP).$$  
\end{cor}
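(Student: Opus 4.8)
The plan is to obtain Corollary \ref{elliptic cokernel} as a direct instance of Theorem \ref{ESKS cokernel}, applied to the Selmer triple $(T,\FFc,\PP)$ with $T=T_p(E)$, the auxiliary Selmer structure $\FF_{\textup{u-}\ell}$, and the integer $n=\textup{ord}_p(c_\ell)$ (note $n\geq 1$ since $p\mid c_\ell$). So all the work is to check the three blocks of hypotheses of Theorem \ref{ESKS cokernel} in this situation. The two ``soft'' ones go quickly: because $p>3$ and, by (\ref{surj-serre}), the representation $G_\QQ\to\textup{Aut}(E[p^\infty])$ is surjective, $T=T_p(E)$ satisfies \textbf{H1}--\textbf{H4} --- surjectivity of the mod-$p$ representation already yields the absolute irreducibility of $T/pT$ demanded by \textbf{H1} and forces the cohomological vanishings of \textbf{H3} and the condition \textbf{H4a} (this is carried out in \cite{ru98}) --- while a set $\PP$ of Kolyvagin primes fulfilling \textbf{H5} is available by the standard construction recalled at the end of \S\ref{hypo}; and the core Selmer rank identity $\XX(T,\FFc)=1$ is exactly Example \ref{core-ell}, coming from $\textup{rank}_{\ZZ_p}T^-=1$.

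The substantive point is the local freeness condition, namely that $\bigl(H^1_{\FFc}(\QQ_\ell,T)/H^1_{\FF_{\textup{u-}\ell}}(\QQ_\ell,T)\bigr)\otimes_{\ZZ_p}\ZZ_p/p^n\ZZ_p$ is a free $\ZZ_p/p^n$-module of positive rank, and here I would argue exactly as in the computation displayed just before the corollary. By Remark \ref{rem:ur-finite comparison} the quotient $H^1_{\FFc}(\QQ_\ell,T)/H^1_{\FF_{\textup{u-}\ell}}(\QQ_\ell,T)$ is finite of order the $p$-part of $c_\ell$, i.e.\ of order $p^n$; it is thus already annihilated by $p^n$, so the tensor product with $\ZZ_p/p^n$ changes nothing and it suffices to show the quotient is \emph{cyclic} (equivalently free of rank one over $\ZZ_p/p^n$). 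For this I would invoke the identifications summarized in the diagram preceding the corollary: the (injective) pro-$p$ Kummer map identifies $H^1_{\FFc}(\QQ_\ell,T)$ with the image of $E(\QQ_\ell)\otimes\ZZ_p\cong E(\QQ_\ell)[p^\infty]$ in $H^1(\QQ_\ell,T)$ and $H^1_{\FF_{\textup{u-}\ell}}(\QQ_\ell,T)$ with the image of $E_0(\QQ_\ell)\otimes\ZZ_p$ (these being compatible, since $E_0$ and $E$ have the same Tate module), so the quotient is canonically the cokernel of $E_0(\QQ_\ell)\otimes\ZZ_p\to E(\QQ_\ell)\otimes\ZZ_p$. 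Applying the right-exact functor $-\otimes_{\ZZ}\ZZ_p$ to the exact sequence (\ref{tamagawaseq}) exhibits that cokernel as $(\ZZ/c_\ell\ZZ)\otimes\ZZ_p$, which is cyclic of order $p^n$; this is precisely where $p>3$ and $p\mid c_\ell$ are used, since by \cite{silverman} Corollary C.15.2.1 they force split multiplicative reduction at $\ell$, whence the component group is the cyclic group $\ZZ/c_\ell\ZZ$ and the sequence (\ref{tamagawaseq}) is valid. With all three hypothesis-blocks verified, Theorem \ref{ESKS cokernel} delivers $\textup{im}\bigl(\textup{\textbf{ES}}(T)\to\textup{\textbf{KS}}(T,\FFc,\PP)\bigr)\subset p^n\textup{\textbf{KS}}(T,\FFc,\PP)$.

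The genuinely load-bearing step is that last local computation: upgrading ``the Tamagawa quotient has order $p^n$'' to ``the Tamagawa quotient is \emph{free} over $\ZZ_p/p^n$'' is equivalent to the cyclicity of the $p$-part of the component group, which is special to elliptic curves with split multiplicative reduction --- it is exactly what obstructs picking up more than one Tamagawa factor, or running the argument for a general abelian variety. Two subsidiary points that Theorem \ref{ESKS cokernel} and its proof (through Corollary \ref{KS-vanishing} and Proposition \ref{main-cart}) tacitly need should also be recorded: that $H^1(\QQ_\ell,T)/H^1_{\FFc}(\QQ_\ell,T)$ is $\ZZ_p$-torsion-free --- standard for the finite local condition at $\ell\neq p$, since that quotient injects into the $F$-vector space $H^1(\QQ_\ell,V)/H^1_{\textup{f}}(\QQ_\ell,V)$, cf.\ Lemma \ref{Lemma371} and \cite{r00} --- and that the inclusion $H^1_{\FF_{\textup{u-}\ell}}(\QQ_\ell,T)\subsetneq H^1_{\FFc}(\QQ_\ell,T)$ is strict, which is immediate once the quotient is known to be nonzero, as holds because $n\geq 1$.
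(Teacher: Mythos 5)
Your proposal is correct and follows essentially the same route as the paper: verify \textbf{H1--H5} and $\XX(T,\FFc)=1$ for $T_p(E)$, identify $H^1_{\FFc}(\QQ_\ell,T)/H^1_{\FF_{\textup{u-}\ell}}(\QQ_\ell,T)$ with the $p$-part of the (cyclic, by split multiplicative reduction) component group via the Kummer-map descriptions and the sequence (\ref{tamagawaseq}), conclude it is free of rank one over $\ZZ_p/p^n$, and apply Theorem~\ref{ESKS cokernel}. The only blemish is the literal claim $E(\QQ_\ell)\otimes\ZZ_p\cong E(\QQ_\ell)[p^\infty]$ (false for the algebraic tensor product, since $E_0(\QQ_\ell)\cong\ZZ_\ell^\times$ has a pro-$\ell$ part); the paper instead uses $X[p^k]\cong X/p^kX$, and your cokernel computation by right-exactness is unaffected, so this is a harmless imprecision rather than a gap.
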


Let $N$ be the conductor of $E$. Kato~\cite{kato} has constructed an Euler system which gives rise to a Kolyvagin system $\kappa^{\textup{Kato}} \in \textup{\textbf{KS}}(T,\FFc,\PP)$ for a suitably chosen set of primes $\PP$, see~\cite{ru98} \S3.5 and \cite{mr02} \S6.2 for more details. Corollary~\ref{elliptic cokernel} shows that $\kappa^{\textup{Kato}} \in p^n \textup{\textbf{KS}}(T,\FFc,\PP).$ Further we know (Theorem~\ref{thmdvr}) that $\textup{\textbf{KS}}(T,\FFc,\PP)$ is a free $\ZZ_p$-module of rank one, and is generated by a primitive Kolyvagin system. We fix such a generator $\kappa^E$ so that $\kappa^{\textup{Kato}}=p^\alpha\cdot\kappa^E$ for some $\alpha \geq n$.

The following Theorem is the main application of $\kappa^{\textup{Kato}}$. Let $L(E,s)$ denote the Hasse-Weil $L$-function attached to $E$, and $L_N(E,s)$ the \emph{non-primitive} $L$-function which is obtained  by removing the Euler factors at primes dividing the conductor $N$ of $E$. Let $\Omega_E$ be the fundamental period of $E$, and $\textup{TS}_E$ be the Tate-Shafarevich group of $E$.
\begin{thm}
\label{Kato-main}\textup{(Kato \cite{kato})}
Assume (\ref{p3}) and (\ref{surj-serre}) holds. Suppose further that
\begin{itemize}
\item $E$ has good reduction at $p$,
\item $p \nmid $ $E(\mathbb{F}_p)$,
\item $p$ does not divide the integer $r_E$ of~\cite{ru98} Theorem 7.1,
\item $L(E,1) \neq 0$.
\end{itemize}
  Then $$\textup{length}(\textup{TS}_E[p^\infty]) \leq \textup{ord}_p(L_N(E,1)/\Omega_E).$$
\end{thm}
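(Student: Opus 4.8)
The plan is to synthesize Kato's construction with the Kolyvagin system formalism surveyed in \S\ref{thmsMR}, reproducing the argument of \cite{mr02}~\S6.2 and \cite{ru98}~\S3.5. First I would record that under (\ref{p3}) and (\ref{surj-serre}) the curve $E$ has no complex multiplication, so by the Remark following the hypotheses of \S\ref{hypo} the Selmer triple $(T,\FFc,\PP)$ --- with $T=T_p(E)$ and $\PP$ a set of primes chosen as in \cite{ru98} --- satisfies \textbf{H1-H6}, and by Example~\ref{core-ell} the core Selmer rank $\XX(T,\FFc)$ equals $1$. Hence Theorem~\ref{thmdvr} applies: $\textup{\textbf{KS}}(T,\FFc,\PP)$ is a free $\ZZ_p$-module of rank one, generated by a primitive Kolyvagin system. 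Feeding Kato's Euler system $\mathbf{c}^{\textup{Kato}}\in\textup{\textbf{ES}}(T)$ through the Euler system to Kolyvagin system map of Theorem~\ref{esksmap} produces $\kappa^{\textup{Kato}}\in\textup{\textbf{KS}}(T,\FFc,\PP)$ with $\kappa^{\textup{Kato}}_1=\mathbf{c}_\QQ^{\textup{Kato}}$, the bottom class of Kato's system.

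Second, I would invoke the Mazur--Rubin bound in the core-rank-one case (the precise form being \cite{mr02}~Theorem 5.2.14, packaged for this situation as \cite{mr02}~Theorem 6.2.4): whenever $\kappa_1\neq 0$ for $\kappa\in\textup{\textbf{KS}}(T,\FFc,\PP)$, the dual Selmer group $H^1_{\FFc^{*}}(\QQ,W^{*})$ is finite, and its $\ZZ_p$-length is at most $\textup{ord}_p$ of the image of $\kappa_1$ in the singular quotient $H^1(\QQ_p,T)/H^1_{\textup{f}}(\QQ_p,T)$ (recall $\FFc$ is relaxed at $p$, so it is exactly this singular part at $p$ that records the leading term). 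Since $T^{*}\cong T$ via the Weil pairing we have $W^{*}\cong E[p^{\infty}]$, and $H^1_{\FFc^{*}}(\QQ,W^{*})$ is the Selmer group of $E[p^{\infty}]$ with the strict condition at $p$ and the unramified condition at the primes dividing $N$.

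Third come the two arithmetic identifications that bracket this bound. On one side, Kato's explicit reciprocity law (\cite{kato}, see also \cite{ru98}~Theorem 7.1) computes the image of $\textup{loc}_p(\mathbf{c}_\QQ^{\textup{Kato}})$ under the dual exponential map: because $L(E,1)\neq 0$ this class is nonzero, and $\textup{ord}_p$ of its image in the singular quotient equals $\textup{ord}_p\!\left(L_N(E,1)/\Omega_E\right)$ up to the contribution of the integer $r_E$ of \cite{ru98}~Theorem 7.1, which is a $p$-adic unit under the hypotheses $p\nmid E(\mathbb{F}_p)$ and $p\nmid r_E$ (good reduction at $p$ is what makes the dual exponential and the comparison with the period $\Omega_E$ available). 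On the other side, $L(E,1)\neq 0$ forces $E(\QQ)$ to be finite, while (\ref{surj-serre}) gives $E(\QQ)[p]=0$; combining these with good reduction at $p$ and $p\nmid E(\mathbb{F}_p)$ one verifies that $H^1_{\FFc^{*}}(\QQ,W^{*})$ is canonically isomorphic to $\textup{TS}_E[p^{\infty}]$. Assembling the three pieces yields
$$\textup{length}(\textup{TS}_E[p^{\infty}])=\textup{length}\,H^1_{\FFc^{*}}(\QQ,W^{*})\leq\textup{ord}_p\!\left(L_N(E,1)/\Omega_E\right).$$

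The main obstacle is the arithmetic input imported in the third step: Kato's explicit reciprocity law, which identifies the bottom class of the Euler system with the special $L$-value, is the deep ingredient, and I would cite it rather than reprove it. What requires genuine care --- and what the list of hypotheses in the statement is tailored to guarantee --- are the two ``interface'' verifications: that the Kolyvagin-theoretic dual Selmer group $H^1_{\FFc^{*}}(\QQ,W^{*})$ really coincides with $\textup{TS}_E[p^{\infty}]$ (tracking the local conditions at $p$ and at the primes dividing $N$, and the roles of $p\nmid E(\mathbb{F}_p)$ and of $r_E$), and that the Mazur--Rubin core-rank-one bound is precisely the statement about the singular localization of $\kappa_1$ at $p$ that Kato's reciprocity law feeds.
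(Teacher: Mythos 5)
Your proposal follows exactly the route the paper itself indicates for Theorem~\ref{Kato-main}: the paper does not reprove Kato's result but cites the Kolyvagin/Euler system bound on the Selmer group (\cite{ru98} Theorem 3.2, equivalently the core-rank-one machinery of \cite{mr02} \S6.2) combined with Kato's reciprocity-law computation of $\kappa^{\textup{Kato}}_1=\mathbf{c}_\QQ$ (\cite{ru98} Theorems 7.1 and 8.6), which is precisely your steps two and three. So the proposal is correct and takes essentially the same approach, with the delicate identifications (dual Selmer group versus $\textup{TS}_E[p^\infty]$, the role of $r_E$ and $p\nmid |E(\mathbb{F}_p)|$) appropriately delegated to the same references.
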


To prove this Theorem one utilizes Kolyvagin system machinery with $\kappa^{\textup{Kato}}$ to bound the classical Selmer group (see for example \cite{ru98} Theorem 3.2) and then use Kato's calculations (see the proof of \cite{ru98} Theorem 8.6) with $\kappa^{\textup{Kato}}_1$ (which appears as $c_\QQ$ in~\cite{ru98}). One could use $\kappa^{E}$ instead of $\kappa^{\textup{Kato}}=p^\alpha\cdot\kappa^{E}$ to bound the classical Selmer group (which, in a sense, is a "better" Kolyvagin system), and use Kato's calculations for $\kappa^{\textup{Kato}}_1$ together with the simple relation $\kappa^{\textup{Kato}}_1=p^\alpha\cdot\kappa^{E}$ to prove the folowing stronger version of Theorem~\ref{Kato-main}:
\begin{cor}
\label{selmersize}
Assume the hypotheses of Theorem~\ref{Kato-main} holds. Then
$$\textup{length}(\textup{TS}_E[p^\infty]) \leq \textup{ord}_p\left(\frac{L_N(E,1)}{p^\alpha\cdot\Omega_E}\right) \leq \textup{ord}_p\left(\frac{L_N(E,1)}{c_\ell\cdot\Omega_E}\right).$$
\end{cor}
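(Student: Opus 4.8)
The plan is to re-run the Kolyvagin system machinery of \cite{mr02} in exactly the form used to prove Theorem~\ref{Kato-main}, but with the \emph{primitive} generator $\kappa^{E}$ in place of $\kappa^{\textup{Kato}}$, and then to feed the resulting bound into Kato's $L$-value computation through the relation $\kappa^{\textup{Kato}} = p^{\alpha}\kappa^{E}$. The first inequality is then the improved bound coming from the fact that $\kappa^{E}$ is a ``better'' Kolyvagin system than $\kappa^{\textup{Kato}}$, and the second is a purely arithmetic consequence of $\alpha \geq n = \textup{ord}_{p}(c_{\ell})$.

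First I would assemble the ingredients. By Theorem~\ref{thmdvr} the module $\textup{\textbf{KS}}(T,\FFc,\PP)$ is a free $\ZZ_{p}$-module of rank one, generated by a primitive $\kappa^{E}$; by Corollary~\ref{elliptic cokernel} one has $\kappa^{\textup{Kato}} = p^{\alpha}\kappa^{E}$ with $\alpha \geq n = \textup{ord}_{p}(c_{\ell})$, and in particular $\kappa^{\textup{Kato}}_{1} = p^{\alpha}\kappa^{E}_{1}$. Since $L(E,1)\neq 0$, Kato's Euler system produces a nonzero class $\kappa^{\textup{Kato}}_{1} = \mathbf{c}_{\QQ}$, so $\kappa^{E}_{1}\neq 0$ as well. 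Next I would recall that the proof of Theorem~\ref{Kato-main} has two independent stages: (a) the Kolyvagin system bound of \cite{mr02} (in the form of \cite{ru98} Theorem 3.2), which for \emph{any} Kolyvagin system $\kappa$ with $\kappa_{1}\neq 0$ bounds $\textup{length}(\textup{TS}_{E}[p^{\infty}])$ by a quantity that depends on $\kappa$ only through the $\ZZ_{p}$-divisibility of $\kappa_{1}$ (the local terms at $p$ being absorbed into the hypotheses ``good reduction at $p$'', ``$p\nmid\#E(\mathbb{F}_{p})$'', ``$p\nmid r_{E}$''); and (b) Kato's explicit reciprocity computation, used in the proof of \cite{ru98} Theorem 8.6, which identifies the $p$-divisibility of $\kappa^{\textup{Kato}}_{1}$ with $\textup{ord}_{p}(L_{N}(E,1)/\Omega_{E})$.

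Applying stage (a) with $\kappa = \kappa^{E}$ and substituting the relation $\textup{ord}_{p}(\kappa^{E}_{1}) = \textup{ord}_{p}(\kappa^{\textup{Kato}}_{1}) - \alpha$ obtained from $\kappa^{\textup{Kato}}_{1} = p^{\alpha}\kappa^{E}_{1}$, then invoking stage (b), I would obtain
$$\textup{length}(\textup{TS}_{E}[p^{\infty}]) \leq \textup{ord}_{p}(\kappa^{\textup{Kato}}_{1}) - \alpha = \textup{ord}_{p}\left(\frac{L_{N}(E,1)}{\Omega_{E}}\right) - \alpha = \textup{ord}_{p}\left(\frac{L_{N}(E,1)}{p^{\alpha}\cdot\Omega_{E}}\right),$$
which is the first inequality. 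The second is immediate: since $\alpha \geq n = \textup{ord}_{p}(c_{\ell})$,
$$\textup{ord}_{p}\left(\frac{L_{N}(E,1)}{p^{\alpha}\cdot\Omega_{E}}\right) = \textup{ord}_{p}\left(\frac{L_{N}(E,1)}{\Omega_{E}}\right) - \alpha \leq \textup{ord}_{p}\left(\frac{L_{N}(E,1)}{\Omega_{E}}\right) - \textup{ord}_{p}(c_{\ell}) = \textup{ord}_{p}\left(\frac{L_{N}(E,1)}{c_{\ell}\cdot\Omega_{E}}\right).$$

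The main obstacle I expect is bookkeeping rather than new mathematics: one must verify that replacing $\kappa^{\textup{Kato}}$ by $\kappa^{E}$ in stage (a) lowers the Selmer bound by \emph{exactly} $\alpha$, i.e. that the quantity occurring in the core rank one Kolyvagin bound is ``linear'' in the $p$-divisibility of the first term and hence drops by $\alpha$ under $\kappa^{\textup{Kato}}_{1}\mapsto \kappa^{E}_{1}=p^{-\alpha}\kappa^{\textup{Kato}}_{1}$. This is precisely the content of the refined bound \cite{mr02} Theorem 5.2.14 for core rank one, and the primitivity of $\kappa^{E}$ is what guarantees that its first term is as $p$-indivisible as the Kolyvagin system module allows, so that the improvement over the bound obtained from $\kappa^{\textup{Kato}}$ is genuine. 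No input beyond \cite{mr02}, \cite{ru98}, Kato's theorem, and Corollary~\ref{elliptic cokernel} is required.
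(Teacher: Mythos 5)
Your proposal is correct and follows exactly the paper's own (very brief) argument: run the core-rank-one Kolyvagin system bound with the primitive generator $\kappa^{E}$ in place of $\kappa^{\textup{Kato}}$, feed in Kato's computation of $\kappa^{\textup{Kato}}_{1}$ via the relation $\kappa^{\textup{Kato}}_{1}=p^{\alpha}\kappa^{E}_{1}$ from Corollary~\ref{elliptic cokernel}, and deduce the second inequality from $\alpha\geq n=\textup{ord}_{p}(c_{\ell})$. Your write-up is in fact more detailed than the paper's, which consists only of the sentence preceding the corollary.
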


\section{Concluding Remarks}
\label{concluding}
\subsection{More on Kato's Euler system} Explicit calculations carried out by Kato~\cite{kato} to determine a bound on the size of the Selmer group are limited to the case $L(E,1)\neq 0$. In this case the classical Selmer group $\mathcal{S}_E$ attached to $E$ is finite and one only has to deal with the very first term $\kappa^{\textup{Kato}}_1$ of the Kolyvagin system $\kappa^{\textup{Kato}} \in \textup{\textbf{KS}}(T,\FFc,\PP)$. In fact, in Corollary~\ref{selmersize} above we only use Corollary~\ref{elliptic cokernel} to conclude that $\kappa^{\textup{Kato}}_1=p^\alpha\cdot\kappa^{E}_1$, where $\alpha$ and  $\kappa^{E}$ are as above. This is sufficient in the setting of Theorem~\ref{Kato-main}. 

However one should note that Corollary~\ref{elliptic cokernel} says much more than the comparison above for the initial terms of these Kolyvagin systems, it in fact says that \be\label{div-general}\kappa^{\textup{Kato}}_r=p^\alpha\cdot\kappa^{E}_r\ee for every $r \in \NN(\PP)$, where $\NN(\PP)$ is the set of integers which are square free products of primes in $\PP$. If $\kappa^{\textup{Kato}}_1$ (hence also $\kappa^E_1$) is non-zero (which essentially put us in the setting of Theorem~\ref{Kato-main}) then  (\ref{div-general}) easily follows from the equality $\kappa^{\textup{Kato}}_1=p^\alpha\cdot\kappa^{E}_1$ and Theorem~\ref{thmpa}. Of course this is not always the case and (\ref{div-general}) is a much stronger statement in general. Unfortunately, when the Hasse-Weil $L$-function vanishes at $s=1$ (this \emph{should} amount to saying $\kappa_1^{\textup{Kato}}=0$, cf.~\cite{mr02} Corollary 5.2.13) there is no computation yet available with Kato's Kolyvagin system to exemplify the content of Corollary~\ref{elliptic cokernel} further, in terms of bounding the Selmer group (such as Corollary~\ref{selmersize}, which applies when $L(E,1)\neq0$).

\subsection{Tamagawa numbers and level lowering} 
We keep assuming $T=T_p(E)$, the $p$-adic Tate module of an elliptic curve $E_{/\QQ}$, and hypotheses (\ref{p3}) and (\ref{surj-serre}). Corollary~\ref{elliptic cokernel} says that $$\kappa^{\textup{Kato}} \in c_\ell \cdot\textup{\textbf{KS}}(T,\FFc,\PP).$$ This statement is reflected in Corollary~\ref{selmersize} as an improvement to Kato's Theorem~\ref{Kato-main}. However, this method captures only one Tamagawa factor. What if there are more then one Tamagawa numbers which are divisible by $p$? A natural question to ask is:

\textbf{Question 1.}Is it true that $\kappa^{\textup{Kato}} \in \prod_{\ell|N}c_\ell \cdot\textup{\textbf{KS}}(T,\FFc,\PP)$? 

This question turns out to be more delicate when there is more than one Tamagawa number which is divisible by $p$.  We first consider a preliminary version of Question 1. Let $d$ be the number of primes $\ell$ for which $p|c_\ell$. 

\textbf{Question 2.} Is it true that $\kappa^{\textup{Kato}} \in p^d\textup{\textbf{KS}}(T,\FFc,\PP)$? 

To address this question we consider the newform $f_E$ of level $N$ associated with $E$. Since we assumed $p>3$, $p|c_\ell$ implies that $E$ has split multiplicative reduction at $\ell$. Thus if $p|c_\ell$, then $\ell||N$. The assumption that $p|c_\ell$ in fact translates into the statement that the Galois representation $E[p]\cong T/pT$ is \emph{finite} at $\ell$. Thus, we may apply level lowering theorem of Ribet~\cite{ribet} to arrive at a modular form $g$ of level $N/\ell$ and a Galois representation $T_g$ attached to $g$ such that $T_g/pT_g \cong T/pT$. Note that $\ell$ is no longer a bad prime for $g$. The author is quite curious to see whether the Euler system for the modular form $g$ could play a role in this context to answer Question 2.

 More generally, Dummigan~\cite{dummigan} has studied the Tamagawa factors of modular forms and level lowering for their mod $p^n$ representations for $n>1$. Similarly, one might try to approach more general Question~1 via Dummigan's more general level lowering results.  

\subsection{Tamagawa numbers for higher dimensional Abelian varieties}
The discussion above with Kato's Euler system (for elliptic curves) suggests that if one would like to apply  Theorem~\ref{ESKS cokernel} with an abelian variety $A$ of higher dimension, one should understand the structure of the $p$-part $\Phi_p$ of the component group of $A/\QQ_ell$ for each $\ell\neq p$. For example, when $A=E$ is one dimensional Kodaira-N\'eron Theorem (\cite{silverman} Corollary C.15.2.1) shows that $\Phi_p$ is always cyclic if $p>2$, and this is exactly what we use to prove Corollary~\ref{elliptic cokernel}.

Let $A_{/\QQ_\ell}$ be an abelian variety (with $\ell\neq p$). Let $\mathcal{A}$ be its N\'eron model over $\ZZ_\ell$. Suppose $t$ (resp. $u$) denote the dimension of the toric (resp. unipotent)  part of the special fiber $\mathcal{A}_s$. Let $\Phi[p]$ denote the $p$-torsion of $\Phi$. Since $\Phi$ is a finite group, we have $\Phi[p]\cong \Phi_p/p\Phi_p$. Theorem below can be found in \cite{abbes}:
\begin{thm}\textup{(\cite{abbes} Proposition 5.13 (i))}
\label{component-rank} If $p\geq 3$,  
$$\textup{dim}_{\mathbb{F}_p}\Phi[p]\leq t+u\leq \textup{dim} A.$$
\end{thm}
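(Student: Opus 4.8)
The second inequality is immediate from Chevalley's structure theorem: the identity component $\mathcal{A}_s^0$ of the special fibre of $\mathcal{A}$ fits into an exact sequence $0\to T_0\times U_0\to \mathcal{A}_s^0\to B\to 0$ in which $T_0$ is a torus of dimension $t$, $U_0$ a smooth connected unipotent group of dimension $u$ and $B$ an abelian variety. Since $\mathcal{A}$ is smooth over $\ZZ_\ell$ of relative dimension $\dim A$, one gets $\dim A=\dim\mathcal{A}_s^0=t+u+\dim B\geq t+u$.

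For the first inequality the plan is to reduce everything to a statement about the mod $p$ Galois representation $A[p]$. Because $p\neq\ell$, the reduction map $A(\QQ_\ell^{\textup{unr}})\to\mathcal{A}_s(\overline{\mathbb{F}}_\ell)$ is surjective with pro-$\ell$ kernel, hence an isomorphism on $p$-power torsion; combined with $A[p^\infty](\QQ_\ell^{\textup{unr}})=A[p^\infty]^{I_\ell}$ this identifies $\mathcal{A}_s(\overline{\mathbb{F}}_\ell)[p]$ with $A[p]^{I_\ell}$. On $\overline{\mathbb{F}}_\ell$-points $\mathcal{A}_s^0$ is $p$-divisible (a torus and an abelian variety are, and multiplication by $p$ is an automorphism of $U_0$ since $p\neq\ell$), so the snake lemma applied to multiplication by $p$ on $0\to\mathcal{A}_s^0\to\mathcal{A}_s\to\Phi\to 0$ yields an exact sequence $0\to\mathcal{A}_s^0[p]\to\mathcal{A}_s[p]\to\Phi[p]\to 0$, while applied to $0\to T_0\times U_0\to\mathcal{A}_s^0\to B\to 0$ it gives $\dim_{\mathbb{F}_p}\mathcal{A}_s^0[p]=t+2\dim B$. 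Combining these and substituting $\dim B=\dim A-t-u$,
\[
\dim_{\mathbb{F}_p}\Phi[p]=\dim_{\mathbb{F}_p}A[p]^{I_\ell}-(t+2\dim B)=\dim_{\mathbb{F}_p}A[p]^{I_\ell}-(2\dim A-t-2u),
\]
so $\dim_{\mathbb{F}_p}\Phi[p]\leq t+u$ is equivalent to the inequality $\dim_{\mathbb{F}_p}A[p]^{I_\ell}\leq 2\dim A-u$, which I call $(\star)$.

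Note that $(\star)$ is trivial up to a factor of $u$: since $A[p]^{I_\ell}$ is a subgroup of $A[p]\cong(\ZZ/p)^{2\dim A}$ we already get $t+2\dim B+\dim_{\mathbb{F}_p}\Phi[p]\leq 2\dim A$, i.e.\ $\dim_{\mathbb{F}_p}\Phi[p]\leq t+2u$; in particular the theorem already holds when $A$ has semistable reduction ($u=0$). To recover the remaining factor of $u$ in general I would pass to a finite Galois extension $K'/\QQ_\ell$ over which $A$ acquires semistable reduction, write $t'$ for the toric rank of the reduction over $K'$ and $H=I_\ell/I_{K'}$, so that $A[p]^{I_\ell}=(A[p]^{I_{K'}})^H$. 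Over $K'$, Grothendieck's theory of the monodromy pairing (SGA 7 I, Exp.\ IX) shows that $I_{K'}$ acts on $A[p]$ unipotently, through $1+\overline{N}$ with $\overline{N}^2=0$ and $\textup{rank}(\overline{N})\leq t'$, and one has the standard inequalities $t\leq t'\leq t+u$.

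The decisive step is the descent from $K'$ to $\QQ_\ell$. Using the Galois-equivariant comparison of the N\'eron model of $A$ with that of $A_{K'}$ --- Edixhoven's analysis in the tamely ramified case and its refinements, together with the base-change material in the book of Bosch, L\"utkebohmert and Raynaud --- the unipotent rank $u$ of $\mathcal{A}_s^0$ is precisely the codimension of the $H$-invariant part of the Lie algebra of the semi-abelian reduction of $A_{K'}$; feeding this compatibility into the $H$-action on $A[p]^{I_{K'}}=\mathcal{A}'_s[p](\overline{\mathbb{F}}_\ell)$ (where $\mathcal{A}'$ is the N\'eron model of $A_{K'}$) should force $(A[p]^{I_{K'}})^H$ to omit at least $u$ of the $2\dim A$ dimensions of $A[p]$, which is $(\star)$. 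The hypothesis $p\geq 3$ enters exactly here: for $p=2$ the statement fails --- an elliptic curve of Kodaira type $I_0^*$ over $\QQ_\ell$ has $t=0$, $u=1$, yet $\dim_{\mathbb{F}_2}\Phi[2]=2$ --- in tandem with the failure of perfectness of the relevant $2$-adic pairings on component groups. The routine parts of the argument are the Chevalley decomposition, the $p$-divisibility bookkeeping and the monodromy input over $K'$; the main obstacle is the last paragraph, namely quantifying how much of $A[p]$ the inertia group $I_\ell$ moves in terms of the unipotent rank of the N\'eron special fibre over $\QQ_\ell$.
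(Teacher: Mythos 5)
The paper offers no proof of this statement: it is quoted verbatim from \cite{abbes}, Proposition 5.13(i), so there is nothing internal to compare against. Judged on its own terms, your proposal correctly handles the easy parts: the inequality $t+u\le\dim A$ via Chevalley, the identification $\mathcal{A}_s(\overline{\mathbb{F}}_\ell)[p]\cong A[p]^{I_\ell}$, the two snake-lemma computations, and hence the clean reformulation of $\dim_{\mathbb{F}_p}\Phi[p]\le t+u$ as your inequality $(\star)$: $\dim_{\mathbb{F}_p}A[p]^{I_\ell}\le 2\dim A-u$. Your observation that the trivial bound only yields $t+2u$, your remark that the semistable case ($u=0$) is therefore already done, and your $I_0^*$ counterexample locating where $p\ge 3$ must enter are all correct and show you understand exactly what remains to be proved.

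But what remains to be proved is the entire content of the theorem, and your last paragraph does not prove it. The assertion that $u$ equals the codimension of the $H$-invariants of the Lie algebra of the semiabelian reduction, and that this "should force" $(A[p]^{I_{K'}})^H$ to lose at least $u$ dimensions, is a hope, not an argument: the needed implication passes from a characteristic-$\ell$ Lie-algebra statement to a bound on mod-$p$ torsion invariants, and it is precisely in that passage that the hypothesis $p\ge 3$ (via tameness/semisimplicity of the $H$-action on $p$-torsion) and the monodromy pairing must be exploited quantitatively; nothing in your sketch does so, and you concede as much. For the record, the proofs in the literature (Lorenzini; Bosch--Liu; Edixhoven, as exposed in \cite{abbes}) do not run through $\dim A[p]^{I_\ell}$ at all but argue directly on $\Phi$, using Grothendieck's exact sequences from SGA~7~IX together with Edixhoven's filtration of the component group, whose graded pieces are controlled by $t$ and $u$ separately. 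So your reduction to $(\star)$ is a genuinely different and internally consistent framing, but as it stands the decisive inequality is asserted rather than established.
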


Suppose now that $A_{/\QQ}=A_f$ is an abelian variety of attached to a newform $f$ of level $N$. Then $A_f$ is an abelian variety of $\textup{GL}_2$-type: Let $\mathcal{R}$ denote the ring generated by the Fourier coefficients of $f$ and set $K=\QQ\otimes \mathcal{R}_f$, then $\mathcal{R} \subset \textup{End} _\QQ(A)$ and $[K:\QQ]=\textup{dim} A$. We assume the following additional hypotheses on $p$:
\be
\label{p-inert}
\mathcal{R} \otimes\mathbb{F}_p \hbox{ is a field, i.e. } p \hbox{ is inert in } \mathcal{R}.
\ee 

$\mathcal{R}\otimes\mathbb{F}_p$ acts on $\Phi[p]$, thus Theorem~\ref{component-rank} and assumption~(\ref{p-inert}) shows that
\begin{cor}
\label{R-structure}
$\Phi[p]$ is a cyclic $\mathcal{R}/p\mathcal{R}$-module.
\end{cor}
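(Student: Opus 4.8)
The plan is to combine the dimension bound on $\Phi[p]$ from Theorem~\ref{component-rank} with the hypothesis~(\ref{p-inert}) that $\mathcal{R}\otimes\mathbb{F}_p$ is a field. First I would note that, since $\mathcal{R}\subset\textup{End}_\QQ(A)$, the ring $\mathcal{R}$ acts on the N\'eron model $\mathcal{A}_{/\ZZ_\ell}$ by functoriality of N\'eron models, hence acts on the special fiber $\mathcal{A}_s$ and therefore on its component group $\Phi$; since $p\Phi[p]=0$, this action factors through $\mathcal{R}/p\mathcal{R}=\mathcal{R}\otimes\mathbb{F}_p$. Thus $\Phi[p]$ is a module over the field $\mathcal{R}\otimes\mathbb{F}_p$, i.e. an $\mathbb{F}_p$-vector space with a compatible $\mathcal{R}\otimes\mathbb{F}_p$-module structure — in other words an $(\mathcal{R}\otimes\mathbb{F}_p)$-vector space.

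Next I would invoke Theorem~\ref{component-rank}, which (for $p\geq 3$, and $p>3$ is in force via~(\ref{p3})) gives $\dim_{\mathbb{F}_p}\Phi[p]\leq t+u\leq \dim A=[K:\QQ]=\dim_{\mathbb{F}_p}(\mathcal{R}\otimes\mathbb{F}_p)$, the last equality because $\mathcal{R}$ is an order in $K$ and hence $\mathcal{R}\otimes\mathbb{F}_p$ has $\mathbb{F}_p$-dimension equal to the rank of $\mathcal{R}$ as a $\ZZ$-module, which is $[K:\QQ]$. Since $\Phi[p]$ is a vector space over the field $\mathcal{R}\otimes\mathbb{F}_p$, its $\mathbb{F}_p$-dimension is $\dim_{\mathbb{F}_p}(\mathcal{R}\otimes\mathbb{F}_p)$ times its $(\mathcal{R}\otimes\mathbb{F}_p)$-dimension; the inequality above then forces the $(\mathcal{R}\otimes\mathbb{F}_p)$-dimension to be $0$ or $1$. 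Either way $\Phi[p]$ is cyclic as an $\mathcal{R}\otimes\mathbb{F}_p$-module. Finally, since $\Phi_p$ is a finite abelian $p$-group with $\Phi[p]\cong\Phi_p/p\Phi_p$, a single generator of $\Phi_p/p\Phi_p$ over $\mathcal{R}/p\mathcal{R}$ lifts by Nakayama to a single generator of $\Phi_p$ over $\mathcal{R}$ (equivalently over $\mathcal{R}_p$, or over $\mathcal{R}/p^m\mathcal{R}$ for $m$ large enough to kill $\Phi_p$); hence $\Phi[p]$ — and in fact $\Phi_p$ — is a cyclic $\mathcal{R}/p\mathcal{R}$-module, which is the assertion.

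I do not expect a serious obstacle here: the only two ingredients are the functorial action of $\mathcal{R}$ on $\Phi$ (standard, from N\'eron model functoriality) and the elementary linear-algebra observation that a vector space over a field of $\mathbb{F}_p$-dimension $\geq \dim A$ cannot have $\mathbb{F}_p$-dimension $\leq \dim A$ unless it is at most one-dimensional over that field. The mildest point requiring care is the dimension count $\dim_{\mathbb{F}_p}(\mathcal{R}\otimes\mathbb{F}_p)=\dim A$, which uses that $\mathcal{R}$ is an order (finitely generated, rank $[K:\QQ]$ over $\ZZ$) in $K$ with $[K:\QQ]=\dim A_f$ — all recorded in the setup preceding the corollary — together with~(\ref{p-inert}) which is exactly what makes $\mathcal{R}\otimes\mathbb{F}_p$ a field so that "module'' can be upgraded to "vector space.''
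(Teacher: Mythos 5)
Your argument is correct and is essentially the paper's own: the paper likewise deduces the corollary by viewing $\Phi[p]$ as a vector space over the field $\mathcal{R}\otimes\mathbb{F}_p$ (using~(\ref{p-inert})) and comparing $\dim_{\mathbb{F}_p}\Phi[p]\leq \dim A$ from Theorem~\ref{component-rank} with $\dim_{\mathbb{F}_p}(\mathcal{R}\otimes\mathbb{F}_p)=[K:\QQ]=\dim A$. Your closing Nakayama step is not needed for this corollary itself; it is precisely the content of the subsequent Corollary~\ref{R_p structure}.
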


Let $\mathcal{R}_p$ denote the completion of the ring $R$ at $p$. The action of $\mathcal{R}$ on $\Phi_p$ naturally extends to an action of $\mathcal{R}_p$ on $\Phi_p$. Corollary~\ref{R-structure} and Nakayama's lemma implies the following

\begin{cor}
\label{R_p structure}
Under the hypotheses above $\Phi_p$ is a cyclic $\mathcal{R}_p$-module.   
\end{cor}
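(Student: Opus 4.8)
The plan is to deduce the statement from Corollary~\ref{R-structure} together with the structure theory of finitely generated modules over a complete discrete valuation ring, exactly as the reference to Nakayama's lemma suggests.

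First I would pin down the role of hypothesis~(\ref{p-inert}): since $p$ is inert in $\mathcal{R}$, the completion $\mathcal{R}_p$ is a complete discrete valuation ring whose maximal ideal is generated by $p$ and whose residue field is $\mathcal{R}_p/p\mathcal{R}_p \cong \mathcal{R}/p\mathcal{R}$. Next, because $\Phi$ is a finite group, $\Phi_p$ is a finite abelian $p$-group, hence annihilated by $p^m$ for some $m\geq 1$; therefore the $\mathcal{R}$-action on $\Phi_p$ factors through $\mathcal{R}/p^m\mathcal{R}\cong\mathcal{R}_p/p^m\mathcal{R}_p$ and extends canonically to the $\mathcal{R}_p$-module structure referred to in the statement, for which $\Phi_p$ is finitely generated (indeed finite).

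Then I would invoke the structure theorem: over the discrete valuation ring $\mathcal{R}_p$ one has an isomorphism of $\mathcal{R}_p$-modules $\Phi_p \cong \bigoplus_{i=1}^{s}\mathcal{R}_p/p^{a_i}\mathcal{R}_p$ with each $a_i\geq 1$, so that the $p$-torsion submodule satisfies $\Phi_p[p]\cong(\mathcal{R}_p/p\mathcal{R}_p)^{s}$ as modules over the residue field $\mathcal{R}/p\mathcal{R}$. But $\Phi_p[p]=\Phi[p]$, which by Corollary~\ref{R-structure} is a cyclic $\mathcal{R}/p\mathcal{R}$-module; hence $s\leq 1$, and $\Phi_p$ is cyclic over $\mathcal{R}_p$. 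Equivalently, one may run the last step through Nakayama's lemma directly: $\Phi_p/p\Phi_p$ has the same $\mathcal{R}/p\mathcal{R}$-dimension $s$ as $\Phi_p[p]$, hence is generated by a single element over the residue field $\mathcal{R}_p/p\mathcal{R}_p$, and Nakayama applied to the finite module $\Phi_p$ over the local ring $\mathcal{R}_p$ produces a single $\mathcal{R}_p$-generator.

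There is no serious obstacle here; the only points that need a line of care are (a) verifying that $p$ inert really does make $\mathcal{R}_p$ a discrete valuation ring with uniformizer $p$, so that the structure theorem applies and the minimal number of generators is governed by the $p$-torsion, and (b) observing that the identification $\Phi_p[p]=\Phi[p]$ is $\mathcal{R}$-equivariant, so that Corollary~\ref{R-structure} may legitimately be fed in. Both are immediate from the definitions.
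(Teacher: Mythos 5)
Your argument is correct and is essentially the paper's own proof: the paper likewise passes from the cyclicity of $\Phi[p]$ over $\mathcal{R}/p\mathcal{R}$ (Corollary~\ref{R-structure}), via the identification of $\Phi[p]$ with $\Phi_p/p\Phi_p$ for the finite group $\Phi$, to cyclicity of $\Phi_p$ over the local ring $\mathcal{R}_p$ by Nakayama's lemma. Your detour through the structure theorem over the discrete valuation ring $\mathcal{R}_p$ is a harmless variant of the same one-generator count.
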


Let $\mathcal{O}$ denote the maximal order of the number field $K$. One easily deduces from the assumption (\ref{p-inert}) that the ring homomorphism $\mathcal{R}/p\mathcal{R}\ra\mathcal{O}/p\mathcal{O}$ is an isomorphism, hence $p$ is inert in the extension $\mathcal{O}/\ZZ$ as well. Further, the inclusion $\mathcal{R} \hookrightarrow \mathcal{O}$ induces a natural isomorphism $\mathcal{R}_p \stackrel{\sim}{\lra} \mathcal{O}_p$. Thus we proved

\begin{prop}
\label{O_p structure}
If we assume (\ref{p-inert}) then $\Phi_p$ is a cyclic $\mathcal{O}_p$-module.   
\end{prop}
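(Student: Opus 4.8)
The plan is to reduce Proposition~\ref{O_p structure} to Corollary~\ref{R_p structure} together with the identification $\mathcal{R}_p\stackrel{\sim}{\ra}\mathcal{O}_p$ induced by $\mathcal{R}\hookrightarrow\mathcal{O}$, so that the $\mathcal{O}_p$-module structure on $\Phi_p$ is obtained from its $\mathcal{R}_p$-module structure by transport along a ring isomorphism; the only genuine content is to set up that identification correctly. First I would record that $\Phi_p$, being a finite abelian $p$-group, is a finitely generated $\ZZ_p$-module, so the given $\mathcal{R}$-action on it extends uniquely to an action of $\mathcal{R}_p=\mathcal{R}\otimes_\ZZ\ZZ_p$, and by Corollary~\ref{R_p structure} this module is cyclic over $\mathcal{R}_p$.

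Next I would pin down $\mathcal{R}_p$ using hypothesis~(\ref{p-inert}). Since $\mathcal{R}\otimes\mathbb{F}_p$ is a field, the canonical ring homomorphism $\mathcal{R}/p\mathcal{R}\ra\mathcal{O}/p\mathcal{O}$ is injective, because a ring homomorphism out of a field into a nonzero ring has trivial kernel; moreover both source and target are $\mathbb{F}_p$-vector spaces of dimension $[K:\QQ]$, as $\mathcal{R}$ and $\mathcal{O}$ are each free of that rank over $\ZZ$, so this map is an isomorphism. Applying the snake lemma to multiplication by $p$ on the short exact sequence $0\ra\mathcal{R}\ra\mathcal{O}\ra\mathcal{O}/\mathcal{R}\ra0$ then yields $(\mathcal{O}/\mathcal{R})[p]=0$, hence $p\nmid\#(\mathcal{O}/\mathcal{R})$; tensoring the inclusion $\mathcal{R}\hookrightarrow\mathcal{O}$ with $\ZZ_p$ over $\ZZ$ therefore gives the desired isomorphism $\mathcal{R}_p\stackrel{\sim}{\ra}\mathcal{O}_p$. (Alternatively one can argue directly that $\mathcal{R}_p$ is a complete local $\ZZ_p$-algebra, free of rank $[K:\QQ]$, with maximal ideal $p\mathcal{R}_p$ and residue field $\mathbb{F}_{p^{[K:\QQ]}}$, hence the unramified extension of $\ZZ_p$ of that degree, which is integrally closed and so coincides with $\mathcal{O}_p$.)

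Finally I would transport the $\mathcal{R}_p$-module structure on $\Phi_p$ along this isomorphism: the resulting $\mathcal{O}_p$-module structure is exactly the natural one appearing in the statement, and a module generated by a single element over $\mathcal{R}_p$ is generated by that same element over $\mathcal{O}_p$ once the two acting rings are identified. I do not expect a real obstacle here, since everything rests on results already established; the only step that needs any care is the middle paragraph, namely checking that ``$p$ inert in $\mathcal{R}$'' already forces $\mathcal{R}$ to be maximal at $p$, after which the proposition is immediate.
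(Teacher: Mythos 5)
Your proposal is correct and follows the same route as the paper: the paper likewise deduces Proposition~\ref{O_p structure} from Corollary~\ref{R_p structure} together with the observation that (\ref{p-inert}) forces $\mathcal{R}/p\mathcal{R}\stackrel{\sim}{\ra}\mathcal{O}/p\mathcal{O}$ and hence $\mathcal{R}_p\stackrel{\sim}{\ra}\mathcal{O}_p$. Your middle paragraph simply fills in the details behind the paper's ``one easily deduces,'' and does so correctly.
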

Note that this is the analogous statement for abelian varieties $A_f$ to that for elliptic curves (i.e. to the case $K=\QQ$), which, in that case, is implied by the Kodaira-N\'eron Theorem.

Consider the $p$-adic Tate module $T_p(A_f)$, and set $$V_p(A_f)=\QQ_p\otimes_{\ZZ_p}T_p(A_f).$$ We keep assuming~(\ref{p-inert}). It is known that $V_p(A_f)$ is a $K_p$-vector space of dimension 2; choose a $G_\QQ$-invariant lattice $T$ inside $V_p(A_f)$ so that $T$ is a free $\mathcal{O}_p$-module of rank 2. Kato~\cite{kato} has constructed an Euler system for this $\mathcal{O}[[G_\QQ]]$ representation. One could use Theorem~\ref{ESKS cokernel} (which applies thanks to Proposition~\ref{O_p structure} with $R=\mathcal{O}_p$) to obtain similar results to Corollary~\ref{elliptic cokernel} in this setting.
\bibliographystyle{alpha}
\bibliography{tam}
\end{document}